\documentclass[11pt]{amsart}
\usepackage[T1]{fontenc}
\usepackage{lmodern}
\usepackage{amsmath,amssymb,amsthm,mathtools}
\usepackage{microtype}
\usepackage{indentfirst}
\usepackage{needspace}
\usepackage[colorlinks=true,linkcolor=blue,citecolor=blue,urlcolor=blue]{hyperref}
\numberwithin{equation}{section}
\newtheorem{theorem}{Theorem}[section]
\newtheorem*{theorem*}{Theorem}
\newtheorem{proposition}[theorem]{Proposition}
\newtheorem*{proposition*}{Proposition}
\newtheorem{lemma}[theorem]{Lemma}

\theoremstyle{remark}
\newtheorem{remark}[theorem]{Remark}
\newtheorem{example}[theorem]{Example}
\newcommand{\Ric}{\operatorname{Ric}}
\newcommand{\Scal}{\operatorname{Scal}}
\newcommand{\Vol}{\operatorname{Vol}}
\newcommand{\Area}{\operatorname{Area}}
\newcommand{\Hess}{\operatorname{Hess}}
\newcommand{\diver}{\operatorname{div}}
\newcommand{\tr}{\operatorname{tr}}
\newcommand{\dd}{\,d}
\newcommand{\R}{\mathbb R}
\newcommand{\Sph}{\mathbb S}
\newcommand{\Om}[2]{\Omega_{#1,#2}}
\title[Scalar curvature growth in dimension three]{Scalar curvature growth on nonnegatively curved three-manifolds}
\author{Qixuan Hu}
\address{Department of Mathematics and Computer Sciences,
Shantou University, Guangdong, P. R. China}
\email{qxhu@stu.edu.cn}
\thanks{This work was supported by SRIG under Grant NTF25026T}
\subjclass[2020]{Primary 53C21; Secondary 53C20}
\keywords{Nonnegative sectional curvature, scalar curvature integral,
asymptotic volume ratio, Busemann function, three-manifolds}
\date{\today}

\begin{document}

\begin{abstract}
Let $(M^3,g)$ be a complete, connected, noncompact Riemannian
three-manifold without boundary and with nonnegative sectional
curvature. We prove that its scalar-curvature integral over geodesic
balls, divided by the radius, has a limit. In the one-ended case,
\[
 \lim_{r\to\infty}\frac1r\int_{B_p(r)}\Scal\dd V
 =8\pi\bigl(\chi(M)-V_M\bigr)\le8\pi(1-V_M),
\]
where $V_M$ is the asymptotic volume ratio. In the one-ended case,
we show that $\chi(M)\in\{0,1\}$.
Thus positive asymptotic volume ratio gives the value
$8\pi(1-V_M)$, while in the collapsed case the value is determined
by the topology of the end. No pole or scalar-curvature bound is
assumed. The proof combines separate smooth approximations of the
Busemann and distance functions, integrable negative curvature errors,
and a determinant estimate on the level surfaces. An averaged boundary
estimate then gives convergence of the integrated extrinsic curvature.
In the two-ended case, the splitting theorem gives the exact limit
$8\pi\chi(N)$ for the compact surface factor $N$.
The one-ended upper bound is attained for every prescribed asymptotic
volume ratio in $[0,1]$, and the two-ended bound is also sharp.
\end{abstract}

\maketitle

\section{Introduction}\label{sec:introduction}

The Cohn--Vossen inequality \cite{CohnVossen} bounds the total
Gaussian curvature of a complete, oriented, noncompact surface with
nonnegative curvature by $2\pi\chi(M)$. Seeking a higher-dimensional
analogue, Yau \cite[Problem~9]{YauProblems} asked about the growth
of integrals of the elementary symmetric functions of the Ricci
tensor on complete manifolds with nonnegative Ricci curvature.
The scalar-curvature case asks whether every complete noncompact
$(M^n,g)$ with $\Ric\ge0$ satisfies
\begin{equation}\label{eq:yau-question}
 \limsup_{r\to\infty}r^{2-n}
 \int_{B_p(r)}\Scal\dd V<\infty.
\end{equation}
In dimension three, the normalization is division by the radius.

Early progress included the estimates of Shi and Yau \cite{ShiYau}
in the K\"ahler setting under boundedness, pinching, and nonnegative
holomorphic bisectional curvature assumptions. Yang \cite{Yang}
constructed counterexamples for higher elementary symmetric functions
of the Ricci tensor. The scalar-curvature question itself has recently
been answered negatively: Hao and Zhu \cite{HaoZhu} and, independently,
Xu \cite{XuCollapse} constructed complete metrics on $\R^3$ with
strictly positive Ricci curvature and unbounded normalized
scalar-curvature integrals. Their examples have zero asymptotic volume
ratio. Xu also constructed compact examples that rule out a dimensional
unit-ball bound under $\Ric\ge0$. In dimensions $n\ge4$, Cheng
\cite{Cheng} obtained counterexamples even with a pole. These results
make the distinction between Ricci and sectional curvature essential.

Petrunin \cite{Petrunin} proved a dimensional local bound under a
lower sectional-curvature bound. Under $K\ge0$, scaling gives a bound
for the normalized integral at every radius. The present paper
determines its asymptotic value in dimension three, including the
contribution of the topology of the end.

Throughout the paper, $(M^3,g)$ is smooth, complete, connected,
noncompact, and without boundary. Write $S=\Scal$ and, for a fixed
$p\in M$, set
\[
 \rho(x)=d(p,x),\qquad I_p(r)=\int_{B_p(r)}S\dd V,\qquad
 V_M=\lim_{r\to\infty}\frac{\Vol B_p(r)}{(4\pi/3)r^3}.
\]
Under $\Ric\ge0$, Bishop--Gromov comparison gives the existence and
basepoint independence of $V_M$, with $0\le V_M\le1$.
When $K\ge0$, the soul theorem \cite{CheegerGromollStructure} gives
$M$ finite homotopy type, so its Euler characteristic $\chi(M)$ is
well defined.

\begin{theorem}\label{thm:main}
Let $(M^3,g)$ be a complete, connected, noncompact Riemannian
three-manifold without boundary, with $K\ge0$. Then $M$ has either
one or two ends, and the following conclusions hold for every $p\in M$.
\begin{enumerate}
\renewcommand{\theenumi}{\roman{enumi}}
\renewcommand{\labelenumi}{\textup{(\theenumi)}}
\item\label{case:one} If $M$ has one end, then $\chi(M)\in\{0,1\}$ and
\begin{equation}\label{eq:main}
 \lim_{r\to\infty}\frac{I_p(r)}r
 =8\pi\bigl(\chi(M)-V_M\bigr)\le8\pi(1-V_M).
\end{equation}
\item\label{case:two} If $M$ has two ends, then it is isometric to
$N^2\times\R$, where $N$ is a closed connected surface with $K_N\ge0$,
and
\begin{equation}\label{eq:two}
 \lim_{r\to\infty}\frac{I_p(r)}r
 =2\int_N\Scal_N\dd A=8\pi\chi(N)\le16\pi.
\end{equation}
\end{enumerate}
The upper bound in \textup{(\ref{case:one})} is attained for every
prescribed $V_M\in[0,1]$, and the constant in
\textup{(\ref{case:two})} is attained.
\end{theorem}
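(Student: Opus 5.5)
The plan follows the abstract: split by the number of ends and, in the one-ended case, integrate a Gauss-equation identity over the level sets of a smoothed distance function.

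\textbf{Ends and the two-ended case.} By the Cheeger--Gromoll splitting theorem, a manifold with $K\ge0$ and two ends contains a line, so $M=N\times\R'$ with $\R'$ a line factor. With two ends, $N$ must be compact, hence $M$ has exactly two ends and cannot have more. In that case $N$ is a closed surface with $K_N\ge0$, and $\Scal=\Scal_N=2K_N$. The ball $B_p(r)$ is the set of points $(x,t)$ with $d_N(x_0,x)^2+t^2<r^2$. Since $\operatorname{diam}N<\infty$, its measure is $2r\Area(N)+O(1)$ in the appropriate sense, and
\[
\frac{I_p(r)}{r}\to 2\int_N\Scal_N\dd A=4\int_N K_N\dd A=8\pi\chi(N).
\]
Here $\chi(N)\le2$. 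Equality holds for the round $\Sph^2\times\R$, which also shows the constant in (\ref{case:two}) is attained.

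\textbf{One-ended topology.} By the soul theorem, $M$ is the normal bundle of a soul $\Sigma$ with $\dim\Sigma\in\{0,1,2\}$, so $\chi(M)=\chi(\Sigma)$.
\begin{itemize}
\item If $\dim\Sigma=0$, then $M\cong\R^3$ and $\chi(M)=1$.
\item If $\dim\Sigma=1$, then $\Sigma$ is a circle and $\chi(M)=0$.
\item If $\dim\Sigma=2$, the bundle is a line bundle over a closed surface with $K\ge0$. A trivial bundle would give two ends, so the bundle is nontrivial. Then $\Sigma$ is either $\Sph^2$, giving the twisted bundle $\R P^3$ minus a point, or $\R P^2$, $T^2$, or the Klein bottle. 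In the one-ended case the holonomy argument leaves only $\chi(\Sigma)\in\{0,1\}$: twisted line bundles over $\R P^2$, $T^2$, and the Klein bottle, where $\R P^2$ gives $1$.
\end{itemize}
The only thing to rule out is $\chi=2$, which would need an orientable $\Sph^2$ soul with a nontrivial line bundle; this is impossible since $\Sph^2$ is simply connected.

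\textbf{The limit formula.} The plan is to use the Gauss equation on level sets of a function $f$ comparable to $\rho$. On the level surface $\Sigma_t=\{f=t\}$,
\[
\Scal=2K_{\Sigma_t}+2\Ric(\nu,\nu)-H^2+|A|^2 .
\]
By the coarea formula, $I_p(r)$ becomes the integral over $t$ of $\int_{\Sigma_t}\Scal/|\nabla f|$. Gauss--Bonnet turns $\int_{\Sigma_t}2K$ into $4\pi\chi(\Sigma_t)$. For large $t$, the level surface is the boundary of a neighborhood of the soul, so $\chi(\Sigma_t)=2\chi(M)$, and this term gives $8\pi\chi(M)\,r$.

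The remaining term is $2\Ric(\nu,\nu)-(H^2-|A|^2)=2\Ric(\nu,\nu)-2\sigma_2(A)$. I would write it as a divergence, $\diver(H\nu-\nabla_\nu\nu)$, plus error terms. Its integral over the shell then reduces to boundary terms of the form $\int_{\Sigma_r}H$. These should be asymptotic to $8\pi V_M r$: the determinant $\sigma_2(A)$ has average $4\pi V_M$ per level, comparing with the tangent cone $C(S^2_{V_M})$, whose cross-section has area $4\pi V_M$ and where $H=2/r$.

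\textbf{Main obstacle.} $\rho$ is not smooth, and Busemann-type functions are only semiconcave. I would smooth separately, using the Busemann function near the soul and the distance function far out, and control the distributional negative parts of $\Hess$ as integrable errors of size $o(r)$ in total. The hardest step is showing that the integrated mean-curvature term converges rather than merely being bounded. For this I would average the boundary estimate over $t\in[r,2r]$ and use the monotonicity of $\Area(\partial B_r)/r^2\to4\pi V_M$.

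For sharpness in (\ref{case:one}), I would use rotationally symmetric metrics on $\R^3$ asymptotic to cones of any angle, which have $\chi=1$, and the cylinder-like cigar times $\R$ for $V_M=0$.
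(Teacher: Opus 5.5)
Your treatment of the ends, the two-ended product, $\chi(M)\in\{0,1\}$ via the soul, and the examples (the cigar times $\R$ also works for $V_M=0$) is fine. One slip: $\R P^3$ minus a point is the twisted line bundle over $\R P^2$, and $\Sph^2$ carries only the trivial one.

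\textbf{The gap is the upper half of the one-ended formula.} Since $\diver(H\nu-\nabla_\nu\nu)=2D-\Ric(\nu,\nu)$, the Gauss equation reads $S=2K_\Sigma+2D-2\diver(H\nu-\nabla_\nu\nu)$. So the integral does not reduce to boundary terms. With $I_f(T)=\int_{\{f<T\}}S\dd V$, $G(T)=\int_{\{a<f<T\}}K_\Sigma\dd V$, $P(T)=\int_{\{a<f<T\}}D\dd V$, $Q(t)=\int_{\Sigma_t}H\dd A$ and $J(T)=\int_{\{a<f<T\}}\Ric(\nu,\nu)\dd V$, one has $I_f=C+2G+2P-2Q=C'+2G+J-Q$. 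After Gauss--Bonnet, a bulk term $2P(T)$ survives alongside the boundary term.

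Your boundary averaging handles the lower bound. It uses $A'\approx Q$ up to $\int_{\Sigma_t}H_-\le C/t$, which requires nearly convex leaves, and $A(t)/t^2\to4\pi V_M$. Together with $J\ge0$ and monotonicity of $I_f$, it gives $\liminf I_p(r)/r\ge8\pi(\chi(M)-V_M)$.

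The upper bound needs $\limsup P(T)/T\le4\pi V_M$, or equivalently, given your averaging of $Q$, $J(T)=o(T)$. Your only argument for this is comparison with the asymptotic cone, which is heuristic. Gromov--Hausdorff convergence of $(M,t^{-2}g,p)$ gives no control of second fundamental forms of smooth level surfaces. What $K\ge0$ and the Gauss equation give for free, $\int_{\Sigma_t}D=2\pi\chi(\Sigma_t)-\int_{\Sigma_t}K_T\le4\pi$, only yields $16\pi(1-V_M)$ when $\chi(M)=1$. So the hard step is not convergence of $Q(T)/T$, which monotonicity of $I_f$ lets you bypass. It is the sharp bound on $\int_{\Sigma_t}D$.

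\textbf{The missing idea} is to use both smoothings on the same exterior region, in different roles. With the outward convention and $K\ge0$, the Busemann exhaustion is convex, not merely semiconcave.

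The leaves are levels of the smoothed Busemann function $f$, so $\Pi\ge-2t^{-3}g$. This gives integrability of $D_-$, $(K_\Sigma)_-$ and $H_-$, and almost-monotone area.

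The smoothed distance function $u$, with $\Hess u\le(\rho^{-1}+\rho^{-2})g$, then serves as a test function on these leaves. Set $v=u|_{\Sigma_t}$ and $\alpha=\langle\nabla u,\nu\rangle\to1$. The Gauss formula gives $\Hess_{\Sigma_t}v=\Hess u|_{T\Sigma_t}-\alpha\Pi$. Hence $0\le\alpha(\Pi+e_tg)\le\lambda_tg-\Hess_{\Sigma_t}v$ with $e_t=2t^{-3}$ and $t\lambda_t\to1$. Determinant monotonicity on positive semidefinite $2\times2$ forms gives $D\le\alpha^{-2}\det(\lambda_tg-\Hess_{\Sigma_t}v)+e_t^2$.

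Integrate over the closed leaf. Use $\int\Delta_{\Sigma_t}v\dd A=0$ and the surface Bochner identity $\int\det\Hess_{\Sigma_t}v\dd A=\frac12\int K_\Sigma|\nabla_{\Sigma_t}v|^2\dd A$. Here $|\nabla_{\Sigma_t}v|\to0$ because $|\nabla u-\nabla f|\to0$. This yields $\int_{\Sigma_t}D\dd A\le c_t\lambda_t^2A(t)+o(1)+C\int_{\Sigma_t}(K_\Sigma)_-\dd A$, with $c_t\to1$ and the last term integrable in $t$. Hence $\limsup P(T)/T\le4\pi V_M$.

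Your plan smooths the Busemann function near the soul and the distance function far out. That places the two functions in disjoint regions and loses exactly this mechanism. Leaves of a smoothed distance function have only an upper bound on $\Pi$, because smoothing across the cut locus creates very negative mean curvature. You would lose the negative-part integrability and still have no upper bound on $D$ where both principal curvatures are negative.
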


The theorem requires neither a pole nor a noncollapsing condition,
and assumes no additional upper or positive lower bound on $S$.
If $V_M>0$, then $M$ has one end: otherwise the product in
Theorem~\ref{thm:main}\,\textup{(\ref{case:two})} would give
$\Vol B_p(r)\le2r\Area(N)$ and hence $V_M=0$.
Nonnegativity of $S$ and \eqref{eq:main} then imply
$\chi(M)\ge V_M>0$. Since $\chi(M)\in\{0,1\}$, one has
$\chi(M)=1$, and therefore
\begin{equation}\label{eq:positive-volume-limit}
 \lim_{r\to\infty}\frac{I_p(r)}r=8\pi(1-V_M).
\end{equation}
For a one-ended manifold with $V_M=0$, the limit is
$8\pi\chi(M)\in\{0,8\pi\}$. The flat product
$\Sph^1\times\R^2$ has $\chi(M)=0$ and illustrates why the
topological term cannot always be replaced by $8\pi$.
For comparison, Liu \cite{LiuRicciThree} showed that a complete
noncompact three-manifold with $\Ric\ge0$ is either diffeomorphic
to $\R^3$ or has a universal cover that splits off a line.
Our argument treats the end topology directly through the level
surfaces of a Busemann exhaustion.

\begin{remark}
Xu \cite[Theorem~4.2]{XuSharp2026} has also obtained the exact
three-dimensional formula $8\pi(\chi(M)-V_M)$ independently. His paper proves
existence of the normalized limit in all dimensions $n\ge3$ and
sharp bounds with rigidity in dimensions $3\le n\le6$.
The present paper gives a focused three-dimensional proof using a
surface Bochner identity and separate Hessian approximations.
\end{remark}

For comparison with the asymptotic formula, recall Petrunin's
local theorem.

\begin{theorem*}[Petrunin {\cite[Theorem~1.1]{Petrunin}}]
For every $n\ge2$ there is a finite constant $C_n$, depending only on
$n$, such that every complete Riemannian $n$-manifold with $K\ge-1$
satisfies
\begin{equation}\label{eq:petrunin-local}
 \int_{B_p(1)}\Scal\dd V\le C_n
 \qquad\text{for every }p\in M.
\end{equation}
\end{theorem*}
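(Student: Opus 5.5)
Since this is a cited result that the paper uses only as a black box, I would not reprove it inside the paper. If I had to, I would follow a Bochner-type averaging scheme combined with an induction on dimension in the spirit of Alexandrov geometry.

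\emph{Approach.} The plan is to write $\Scal$ as an average of Ricci curvatures $\Ric(\nabla f,\nabla f)$ over many distance functions $f$ whose gradients are spread evenly over directions. One then integrates the Bochner formula against a cutoff $\varphi$ supported in $B_p(2)$. By scaling and $K\ge-1$ we may work in a ball of radius comparable to $1$.

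\emph{Step 1: choose the functions.} Pick a finite family of points $q_1,\dots,q_m$ at distance about $1$ from $p$, with $m$ and the spread depending only on $n$, so that the unit gradients $\nabla d_{q_i}$ are uniformly distributed in the unit sphere at most points of $B_p(1)$. Then, up to a dimensional constant,
\[
\Scal \lesssim \sum_i \Ric(\nabla d_{q_i},\nabla d_{q_i}).
\]
Obtaining this pointwise on most of the ball is itself nontrivial. I would first try a Vitali-type covering together with Toponogov comparison to control the angles.

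\emph{Step 2: integrate by parts.} For each $f=d_{q_i}$, the identity $\Ric(\nabla f,\nabla f)=(\Delta f)^2-|\Hess f|^2-\diver\bigl((\Delta f)\nabla f-\Hess f(\nabla f)\bigr)$, valid away from the cut locus, gives
\[
\int\varphi\,\Ric(\nabla f,\nabla f)=\int\varphi\,2\sigma_2(\Hess f)+\int\bigl\langle\nabla\varphi,(\Delta f)\nabla f-\Hess f(\nabla f)\bigr\rangle .
\]
Comparison with $K\ge-1$ gives $\Hess f\le C$ on the support of $\varphi$ in the barrier sense. Since the singular part of the cut locus only contributes with a favorable sign, the boundary term is bounded above by $C\int|\nabla\varphi|\,|\Delta f|$. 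This quantity is controlled because $\Delta f\le C$ and $\int_{B}\Delta f$ is a boundary flux bounded by area comparison.

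\emph{Step 3: the main obstacle.} The hard term is $\int\varphi\,\sigma_2(\Hess f)$. An upper bound on the eigenvalues does not bound $\sigma_2$ from above, because two large negative eigenvalues make it large. So semiconcavity alone does not finish the proof, and this is where dimensional induction must enter. I would first try the following. Interpret $\int\sigma_2(\Hess f)$ over level sets of a second distance function via the coarea formula. On each such hypersurface, this becomes a curvature integral of a semiconcave surface of lower dimension, whose boundary is an Alexandrov space of curvature $\ge-1$ by Perelman--Petrunin extremal subset theory. The inductive hypothesis in dimension $n-1$ then bounds this contribution, and the base case $n=2$ is Gauss--Bonnet plus area comparison.

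Making this induction rigorous requires smoothing semiconcave functions (Greene--Wu) while keeping the curvature errors integrable. I expect this to be the genuine difficulty, and it is why I would cite Petrunin rather than reprove the result.
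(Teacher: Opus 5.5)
Citing the theorem is what the paper does too. It quotes Petrunin only for context and for the scaled bound, and its own argument never uses the result. Your fallback sketch, however, is not a proof.

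\emph{Step~1 fails under collapse.} Take $M=\R\times\Sph^2_\epsilon$, which has $K\ge0$ and $\Scal=2\epsilon^{-2}$, so $\int_{B_p(1)}\Scal\dd V\to16\pi$ as $\epsilon\to0$. For any point $q$, write $d_q=\sqrt{|t(x)-t(q)|^2+d_{\Sph}^2}$ with $d_{\Sph}\le\pi\epsilon$. The spherical component of $\nabla d_q$ then has length $d_{\Sph}/d_q\le\min\{1,\pi\epsilon/|t(x)-t(q)|\}$, and $\Ric$ vanishes on $\partial_t$. Hence $\Ric(\nabla d_q,\nabla d_q)\le\epsilon^{-2}\min\{1,\pi^2\epsilon^2/|t(x)-t(q)|^2\}$, and integrating gives $\int_{B_p(1)}\Ric(\nabla d_q,\nabla d_q)\dd V\le16\pi^2\epsilon$. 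So no bound $\Scal\lesssim\sum_{i\le m}\Ric(\nabla d_{q_i},\nabla d_{q_i})$ with $m$ and constants depending only on $n$ can hold, not even after integrating over the ball. The curvature lies in planes tangent to the collapsed factor, which distance functions from points at unit distance do not see. Moving the $q_i$ to the collapsing scale loses the uniform bound $\Hess d_{q_i}\le C$ of Step~2 and needs a scale that can vary across the ball. A bound uniform in that scale is exactly what Petrunin's theorem provides, and your sketch has no mechanism for it.

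\emph{Step~2: the cut-locus sign claim is wrong.} Off the cut locus your identity is the divergence identity of Lemma~\ref{lem:divergence}. Integrating it in polar coordinates about $q$, however, leaves a term $-\int\varphi(c(\theta))\,J'(c(\theta))\dd\theta$ at the cut points $c(\theta)$, where $J'=(\Delta d_q)J$. This term need not have a favorable sign. On the unit round $\Sph^2$, take $\varphi$ a function of $d_q$, equal to $0$ near $q$ and $1$ near its antipode. Then the term equals $+2\pi$. Dropping it would give $\int\varphi K\lesssim\int|\nabla\varphi|\,|\Delta d_q|$, since $\sigma_2(\Hess d_q)=0$ in dimension two. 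That inequality fails when $\varphi$ jumps near the equator, where $\Delta d_q=\cot d_q$ vanishes. The favorable sign belongs to the singular part of $\Delta d_q$, not to $\diver((\Delta d_q)\nabla d_q)$.

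\emph{Step~3: the induction has nothing to act on.} $\Hess d_q\le C$ bounds the second fundamental form of the level sets only from above. The Gauss equation $K_\Sigma=K_T+\det\Pi$ therefore gives no lower bound on their intrinsic curvature. Extremal-subset theory says nothing that would make level sets of distance functions Alexandrov spaces with curvature $\ge-1$.

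For comparison, the paper's asymptotic three-dimensional argument avoids all three problems by working on the leaves of a smoothed convex Busemann exhaustion. Far out these leaves have no critical points, so there is no cut locus. Their second fundamental form is bounded below, so $(K_\Sigma)_-$ is integrable. The intrinsic term is handled by Gauss--Bonnet rather than induction, and the $\sigma_2$ term by a determinant comparison with a smoothed distance function. That argument relies on a global convex exhaustion and on $K\ge0$ at infinity, so it does not give Petrunin's local bound either.
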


When $K\ge0$, apply this theorem to $g_r=r^{-2}g$ and use
\[
 \int_{B_p^{g_r}(1)}\Scal_{g_r}\dd V_{g_r}
 =r^{2-n}\int_{B_p^g(r)}\Scal_g\dd V_g
\]
to obtain a bound at every radius. Thus boundedness of the normalized
integral is already known. Petrunin's argument establishes a
dimensional constant without specifying its numerical value.

Under a different geometric assumption, Ma
\cite[Theorem~1.3]{MaLCF} proved that the normalized
scalar-curvature integral has an explicit limit on every complete,
noncompact, locally conformally flat manifold with $\Ric\ge0$ in
dimensions $n\ge3$. In dimension three, the formula distinguishes
the flat and cylindrical cases from the remaining cases.

We next recall estimates under nonnegative Ricci curvature.
For complete noncompact nonparabolic three-manifolds with $\Ric\ge0$,
Xu \cite[Theorem~1.2]{XuNonparabolic} proved a weighted estimate
using the monotonicity formulas of Colding and Minicozzi
\cite{ColdingMinicozzi}. If $G_p$ is the minimal positive Green function,
normalized by $-\Delta G_p=\delta_p$, and $\beta=(4\pi G_p)^{-1}$,
with $\beta(p)=0$, then
\begin{equation}\label{eq:xu-green}
 \limsup_{t\to\infty}\frac1t
 \int_{\{\beta\le t\}}S|\nabla\beta|\dd V
 \le8\pi(1-V_M).
\end{equation}
Here nonparabolicity means that a positive Green function exists.
Both the weight and the Green-function sublevel sets are part of this
statement. Bo Zhu \cite{ZhuComparison} obtained related comparison
and integral estimates for harmonic functions. Chen, Xu, and Zhang
\cite[Theorem~1.5]{ChenXuZhang} subsequently proved that the
weighted expression in \eqref{eq:xu-green} has a limit equal to
$8\pi(1-V_M)$ when $V_M>0$.
\Needspace{5\baselineskip}
In related work, Colding \cite{ColdingMonotonicity} developed
Green-function monotonicity formulas under Ricci-curvature bounds,
and Munteanu and Wang \cite{MWGreenComparison} proved sharp
comparisons for Green-function energy and level-set area under
scalar- and Ricci-curvature hypotheses.

A pole gives direct access to geodesic balls. Recall that $o$ is a
\emph{pole} if $\exp_o:T_oM\to M$ is a diffeomorphism. Bo Zhu
\cite[Theorem~1.7]{ZhuGeometry} proved a $20\pi$ asymptotic upper
bound in dimension three under $\Ric\ge0$ and a pole. Xu later
determined the exact value.

\begin{theorem*}[Xu {\cite[Theorem~1.4]{Xu}}]
Let $(M^3,g)$ be a complete Riemannian three-manifold with
$\Ric\ge0$ and a pole. Then, for every $p\in M$,
\begin{equation}\label{eq:xu}
 \lim_{r\to\infty}\frac{I_p(r)}r=8\pi(1-V_M).
\end{equation}
\end{theorem*}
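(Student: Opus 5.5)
The statement just displayed is Xu's theorem for $\Ric\ge0$ with a pole. It is quoted from \cite{Xu}, so strictly it needs no proof here. The following is the proof I would give.

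Since $o$ is a pole, $\exp_o$ is a diffeomorphism, so $\rho_o=d(o,\cdot)$ is smooth on $M\setminus\{o\}$. Each sphere $\Sigma_t=\partial B_o(t)$ is then a smooth embedded $2$-sphere, and the geodesic balls are topological balls. The plan is to prove the formula for $p=o$ first. For a general basepoint, $B_o(r-d)\subset B_p(r)\subset B_o(r+d)$ with $d=d(o,p)$, and $I_o(r)/r$ has a limit, so the limit transfers to $I_p$ provided $\int_{B_o(r+d)\setminus B_o(r-d)}S=o(r)$. I would obtain this from the $\limsup$ version of the argument below applied to slabs, using $S\ge0$.

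\emph{Step 1: Gauss equation and coarea.} On $\Sigma_t$ the Gauss equation gives
\[
2K_{\Sigma}=S-2\Ric(\nu,\nu)+H^2-|A|^2,
\]
with $\nu=\nabla\rho_o$, $H=\Delta\rho_o$ and $A=\Hess\rho_o|_{\Sigma}$. Gauss--Bonnet gives $\int_{\Sigma_t}K_\Sigma=4\pi$. Since $|\nabla\rho_o|=1$, coarea yields
\[
I_o(r)=\int_0^r\Bigl(8\pi+\int_{\Sigma_t}\bigl(2\Ric(\nu,\nu)-H^2+|A|^2\bigr)\Bigr)\dd t .
\]

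\emph{Step 2: the Bochner identity along $\nabla\rho_o$.} Bochner gives
\[
\partial_t H+|A|^2+\Ric(\nu,\nu)=0 .
\]
Substituting $\Ric(\nu,\nu)=-\partial_tH-|A|^2$ turns the integrand into $-2\partial_tH-|A|^2-H^2$. Then use $\frac{d}{dt}\int_{\Sigma_t}H=\int_{\Sigma_t}(\partial_tH+H^2)$ to obtain
\[
\int_{\Sigma_t}\bigl(2\Ric(\nu,\nu)-H^2+|A|^2\bigr)
=-2\frac{d}{dt}\int_{\Sigma_t}H+\int_{\Sigma_t}\bigl(H^2-|A|^2\bigr).
\]
In dimension three $H^2-|A|^2=2\det A$. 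Integrating in $t$, the singular contribution near $o$ is harmless for small $t$, and we get
\[
I_o(r)=8\pi r-2\int_{\Sigma_r}H+2\int_0^r\!\!\int_{\Sigma_t}\det A\,\dd t+O(1).
\]

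\emph{Step 3: asymptotics of the two terms.} Along each ray from $o$, write the area element as $J(t,\theta)\,\dd\theta$ on the unit sphere in $T_oM$. Then $\int_{\Sigma_t}H=\frac{d}{dt}\Area(\Sigma_t)$. The quantity $\int_{\Sigma_t}\det A$ is the integral over $\Sph^2$ of the Jacobian of the Gauss-map-type differential of $\nabla\rho_o$. By Bishop--Gromov, $\Area(\Sigma_t)/(4\pi t^2)\downarrow V_M$, and $\frac{d}{dt}\Area/(8\pi t)\to V_M$ along a.e. $t$ in Cesàro sense. The second-order Riccati comparison for the $2\times2$ shape operator gives $\det A\cdot J\le J/t^2\cdot(\text{ratio})$, and its average tends to $V_M$. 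So the two terms contribute $-16\pi V_M r$ and $+8\pi V_M r$ respectively, giving $8\pi r(1-V_M)+o(r)$.

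\emph{Main obstacle.} The delicate step is turning the monotone Bishop--Gromov quantities into actual limits rather than averaged ones. This concerns $\frac1r\int_{\Sigma_r}H$ and $\frac1r\int_0^r\int_{\Sigma_t}\det A$. Both are only one-sidedly controlled pointwise: for instance $H\le 2/t$ holds, but no lower bound. I would handle this with L'Hôpital-type arguments on the monotone ratio $\Area(\Sigma_t)/t^2$. The key fact is that $\int_0^r\int_{\Sigma_t}\det A$ is an integral of a nonnegative comparison defect plus a term explicitly computable from $\Area$.
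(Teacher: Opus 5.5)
Your Steps~1 and~2 are correct. Since $\int_{\Sigma_t}H\dd A=\mathcal A'(t)\to0$ as $t\to0$, where $\mathcal A(t)=\Area(\Sigma_t)$, they give exactly $I_o(r)=8\pi r-2\mathcal A'(r)+2P(r)$ with $P(r)=\int_0^r\int_{\Sigma_t}\det A\dd A\dd t$. This is the paper's \eqref{eq:scalar-identity} with $\chi_\infty=2$ and no inner cutoff. The paper only quotes Xu's theorem, so there is no in-paper proof to match.

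\textbf{The gap is Step~3, which carries all the content.}

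\emph{Upper bound on $P$.} You attribute the bound on $\det A$ to a Riccati comparison for the $2\times2$ shape operator. Under $\Ric\ge0$, however, only the trace of $A'+A^2+R_\nu=0$ has a sign, so no matrix bound $A\le t^{-1}I$ is available. What does work is $\det A\le H^2/4$, but this gives $\det A\le t^{-2}$ only if also $H\ge-2/t$. You state that $H$ has no lower bound; without one, your argument gives no upper bound on $P(r)/r$. In the paper's $K\ge0$ argument this role is played by the convexity bound \eqref{eq:principal-lower}.

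\emph{Pointwise limit of $\mathcal A'(t)/t$.} Your proposed remedy for the ``main obstacle'' fails as stated. Monotonicity of $\mathcal A(t)/t^2$ yields only $\mathcal A'(t)\le2\mathcal A(t)/t$ and does not determine $\mathcal A'(t)/t$ pointwise.

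\emph{Lower bound on $P$.} The estimate $\liminf_r P(r)/r\ge4\pi V_M$ is never actually argued.

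\textbf{The missing ingredient is the per-ray structure supplied by the pole.} Write $\dd A=J(t,\theta)\dd\theta$ and $j=J^{1/2}$. From $J'=HJ$ and the Riccati equation,
$j''=-j\bigl(\tfrac12\Ric(\nu,\nu)+\tfrac14(\kappa_1-\kappa_2)^2\bigr)\le0$. Also $j(0)=0$, $j'(0)=1$, and $j>0$ for $t>0$ because $\exp_o$ has no critical points. A concave positive function with these data has $j'\ge0$. Hence $0\le H=2j'/j\le2/t$, and $\det A\le t^{-2}$ holds after all.

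Next, use the variation formula $\frac{d}{dt}\int_{\Sigma_t}H\dd A=\int_{\Sigma_t}\bigl(2\det A-\Ric(\nu,\nu)\bigr)\dd A$, which is the paper's \eqref{eq:ricci-identity}. It turns your identity into $I_o(r)=8\pi r-\mathcal A'(r)+\int_{B_o(r)}\Ric(\nu,\nu)\dd V$. Both remaining terms converge after division by $r$:
\begin{itemize}
\item $\mathcal A'(r)/r=\int_{\Sph^2}2\,(j/r)\,j'\dd\theta$. Here $j(r)/r$ and $j'(r)$ are nonincreasing, lie in $[0,1]$, and tend to $j'(\infty,\theta)$. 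Dominated convergence gives $\mathcal A'(r)/r\to2\int_{\Sph^2}j'(\infty,\theta)^2\dd\theta=8\pi V_M$.
\item Integrating $-j''$ gives $\int_0^\infty j\Ric(\nu,\nu)\dd t\le2$. Together with $j\le t$ and dominated convergence, this yields $r^{-1}\int_{B_o(r)}\Ric(\nu,\nu)\dd V\to0$.
\end{itemize}
Alternatively, once $H\ge0$ is known, an averaging argument as in Lemma~\ref{lem:boundary-limits} replaces the pointwise limits. It uses $\int_T^{2T}\mathcal A'\dd t=\mathcal A(2T)-\mathcal A(T)$ and the monotonicity of both the normal Ricci integral and $I_o$.

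Finally, the change of basepoint needs no slab estimate. The inequalities $I_o(r-d)\le I_p(r)\le I_o(r+d)$ and the existence of the limit at $o$ suffice, as in the paper's final paragraph.
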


The pole supplies smooth geodesic spheres diffeomorphic to $\Sph^2$,
allowing the Gauss equation and Gauss--Bonnet theorem to be applied
without cut-locus terms. We impose the stronger condition $K\ge0$
and remove the pole assumption. Formula~\eqref{eq:main} recovers
Xu's value whenever $\chi(M)=1$, including when $V_M=0$.

Under $\Ric\ge0$ and one-endedness, Munteanu and Wang
\cite[Theorem~1.2]{MunteanuWangIntegral} proved that
$0<c\le S\le C<\infty$ implies
$\limsup_{r\to\infty}I_p(r)/r\le8\pi$.
Their proof combines smooth distance approximation, level-set
curvature identities, and geometric control from the positive
scalar-curvature bound. Our argument instead uses the convexity
available under nonnegative sectional curvature.

Deng \cite[Theorem~B(i)]{Deng2026} obtained another $8\pi$ upper
bound for $\limsup_{r\to\infty}I_p(r)/r$, assuming $\Ric\ge0$
and the additional pointwise decay
$S(x)\le C\rho(x)^{-2}$ outside a compact set. When $V_M>0$,
his proof gives the refined upper bound $8\pi(1-V_M)$
\cite[Remark~3.6]{Deng2026}. Under a different hypothesis, Deng
\cite[Theorem~B(ii)]{Deng2026} proves finiteness of the limsup
when a connected exterior region admits a proper function vanishing
on its compact boundary and satisfying $|\nabla F|=1$ throughout
its interior. Our theorem assumes neither quadratic scalar-curvature
decay nor such an exterior function and determines the limit also
when $V_M=0$. 

Pointwise lower bounds on scalar curvature also lead to complementary
volume-growth results. Munteanu and Wang \cite{MWGeometry} proved
that a complete noncompact three-manifold with $\Ric\ge0$ and
$S\ge c>0$ has at most linear volume growth. Chodosh, Li, and
Stryker \cite{CLSVolume} gave a different proof and treated
scalar-curvature lower bounds that decay at infinity. Wei, Xu, and
Zhang \cite{WXZVolume} established a sharp asymptotic linear
volume-growth bound and rigidity under $\Ric\ge0$ and a uniform
positive lower bound on $S$. These results concern volume growth
under scalar-curvature lower bounds; our theorem determines the
scalar-curvature integral under $K\ge0$ without such a lower bound.

For comparison, Chan, Lee, and Li proved the following lower bound
under nonnegative Ricci curvature and positive asymptotic volume ratio.

\begin{proposition*}[Chan--Lee--Li {\cite[Proposition~4.1]{ChanLeeLi2026}}]
Let $(M^3,g)$ be a complete, connected, noncompact Riemannian
three-manifold without boundary. If $\Ric\ge0$ and $V_M>0$, then
for every $p\in M$,
\[
 \liminf_{r\to\infty}\frac1r\int_{B_p(r)}S\dd V
 \ge8\pi(1-V_M).
\]
\end{proposition*}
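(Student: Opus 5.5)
The plan is to deduce the lower bound from a level-set identity for the normalized Green function $\beta=(4\pi G_p)^{-1}$. I would not work with geodesic spheres directly, since they may meet the cut locus. The condition $V_M>0$ makes $M$ nonparabolic, so $G_p$ exists. It also provides the Colding--Minicozzi asymptotics: after rescaling by $r^{-1}$, $\beta$ converges to the model function $V_M\,\rho$ on the tangent cone at infinity, and $|\nabla\beta|$ converges in a scale-invariant $L^2$ sense to the constant value $V_M$ of the cone model. The first step is to record these asymptotics in the form needed later:
\[
 \frac{\beta}{V_M\rho}\to1
\]
uniformly at infinity, and the annular averages of $\bigl||\nabla\beta|-V_M\bigr|$ over $\{t\le\beta\le2t\}$ tend to $0$.

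The second step is the lower bound itself, using Stern's identity on regular level sets $\Sigma_s=\{\beta=s\}$ together with the Gauss equation. For regular values $s$, the identity reads
\[
 S=2K_{\Sigma_s}+2\Ric(\nu,\nu)+|A|^2-H^2 ,
\]
and it combines with Bochner's formula for $\beta$ and the coarea formula.

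The key point is the sign of the terms. An upper bound would use $\chi(\Sigma_s)\le2$. The lower bound instead uses the Gauss--Bonnet and Bochner terms to express $\int_{\Sigma_s}S$ as
\[
 \frac{d}{ds}\Bigl(\text{a monotone quantity in }\int_{\Sigma_s}|\nabla\beta|^2\Bigr)
 +\text{a nonnegative defect}.
\]
This nonnegative defect is essentially the squared traceless Hessian of $\beta$ and $\Ric(\nabla\beta,\nabla\beta)$. Discarding it gives a lower bound. Here one-endedness, which is forced by $V_M>0$, makes the level sets connected for large $s$, so the topological term is $4\pi$ up to genus. Genus can only help, because Gauss--Bonnet gives $\int_{\Sigma_s}K=2\pi\chi(\Sigma_s)$ while the remaining terms carry a favorable sign. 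Integrating from $0$ to $t$, the leading terms should combine to
\[
 8\pi t-8\pi\,\frac{1}{t}\int_{\{\beta\le t\}}|\nabla\beta|^3\dd V+o(t),
\]
and the Colding--Minicozzi asymptotics evaluate the middle term as $8\pi V_M t$.

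The third step converts the resulting weighted lower bound into a statement about $I_p(r)$. This is the step I expect to be the main obstacle. The estimate obtained naturally controls $\int S\,\phi(|\nabla\beta|)$ over sublevel sets of $\beta$, and $S$ has no a priori scale-invariant $L^p$ bound under $\Ric\ge0$ alone. Replacing $|\nabla\beta|$ by $V_M$ and $\{\beta\le V_M r\}$ by $B_p(r)$ is therefore not automatic. My first attempt would arrange the Stern-type integration with the weight $|\nabla\beta|^{-1}$ in the coarea formula, so that the identity directly controls $\int_{\{\beta\le t\}}S\dd V$ from below, without any weight. The only error would then be the symmetric difference between $\{\beta\le V_Mr\}$ and $B_p(r)$. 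Since $S\ge0$, the inclusion $\{\beta\le(1-\epsilon)V_Mr\}\subset B_p(r)$ for $r$ large suffices, and this inclusion follows from the uniform ratio convergence. Letting $\epsilon\to0$ would then give
\[
 \liminf_{r\to\infty}\frac{I_p(r)}r\ge8\pi(1-V_M).
\]
The delicate point is the singular set of $\nabla\beta$ in this weighted integration. I would handle it by working with regular values and Sard's theorem, and by checking that the critical set contributes with a favorable sign.
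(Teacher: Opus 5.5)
There is a genuine gap, and it sits at the step you flagged as the main obstacle. Note first that the paper gives no proof of this proposition. It quotes it from Chan--Lee--Li. Its own Theorem~\ref{thm:main} recovers the bound only under $K\ge0$, using two facts about the smoothed Busemann function: $|\nabla f|\to1$ uniformly (Lemma~\ref{lem:gradients}), so that $\dd V$ and $\dd t\,\dd A$ agree asymptotically by \eqref{eq:measure-change}; and its large levels have the fixed Euler characteristic $2\chi(M)$ (Lemma~\ref{lem:end-euler}). Your Green-function argument therefore has to stand on its own.

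\textbf{The weight in your third step.} Gauss--Bonnet evaluates $\int K_\Sigma$ only against $\dd A\,\dd s=|\nabla\beta|\dd V$, times a function of $\beta$. Because the Gauss equation is pointwise, $S$ necessarily carries the same weight. With your weight $|\nabla\beta|^{-1}$, i.e.\ plain $\dd V$, two things go wrong.
\begin{enumerate}
\item The curvature term becomes $\int_{\Sigma_s}K_\Sigma|\nabla\beta|^{-1}\dd A$. This is neither topological nor sign-controlled under $\Ric\ge0$.
\item The term $-\tfrac12H^2$ acquires a weight that is unbounded near critical points and enters with the unfavorable sign. Sard's theorem does not repair this.
\end{enumerate}
So the method yields at most $\liminf_{t\to\infty}t^{-1}\int_{\{\beta\le t\}}S|\nabla\beta|\dd V\ge8\pi(1-V_M)$. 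That is the weighted quantity in \eqref{eq:xu-green}, already handled by Chen--Xu--Zhang.

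To remove the weight you would need $\limsup_{x\to\infty}|\nabla\beta|(x)\le V_M$ pointwise on the support of $S$. Since $S$ is only known to be integrable, the averaged $L^2$ asymptotics from your first step cannot supply this. Colding's sharp bound $|\nabla\beta|\le1$ \cite{ColdingMonotonicity} does remove the weight. However, $\{\beta\le t\}\approx B_p(t/V_M)$, so it only gives $\liminf_{r\to\infty}I_p(r)/r\ge8\pi V_M(1-V_M)$, off by a factor $V_M$.

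\textbf{Sign errors in your second step.}
\begin{enumerate}
\item Topology. For a lower bound you need $\chi(\Sigma_s)=2$ for most levels. Positive genus lowers $\int_{\Sigma_s}K_\Sigma=2\pi\chi(\Sigma_s)$, so it can only hurt. The term $8\pi\chi(M)$ in the paper's formula shows this topological input is genuinely needed. It is true that $V_M>0$ forces $M\cong\R^3$ under $\Ric\ge0$. But nothing excludes critical points of $\beta$ far out, so this alone does not make the regular levels spheres.
\item The defect. Write $S=2K_\Sigma-\tfrac12H^2+2\Ric(\nu,\nu)+|\Pi^\circ|^2$ and substitute $H=2|\nabla\beta|/\beta-\Hess\beta(\nu,\nu)/|\nabla\beta|$. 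This leaves the term $-\tfrac12\bigl(\Hess\beta(\nu,\nu)\bigr)^2/|\nabla\beta|^2$, which has the wrong sign. It is not part of a discardable nonnegative defect. It must be shown to be $o(t)$ after integration, for instance via Colding's monotonicity, with care where $|\nabla\beta|$ is small.
\end{enumerate}

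\textbf{A normalization slip.} Your displayed leading term $8\pi t^{-1}\int_{\{\beta\le t\}}|\nabla\beta|^3\dd V$ grows like $t^2$, so it cannot equal $8\pi V_M t$.
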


\Needspace{6\baselineskip}
\begin{remark}[Scope and relation to the work of Chan--Lee--Li]
\label{rem:scope}
For a complete metric on $\R^3$ with $K\ge0$, Chan, Lee, and Li
\cite[Theorem~1.2]{ChanLeeLi2026} assume
$\alpha:=\limsup_{r\to\infty}I_p(r)/r<8\pi$ and state that
$V_M=1-\alpha/(8\pi)>0$.
Since $\chi(\R^3)=1$, Theorem~\ref{thm:main} gives
$\alpha=8\pi(1-V_M)$ for all such metrics, including $V_M=0$.
More generally, it gives an exact limit for every one-ended
three-manifold with $K\ge0$. For $\chi(M)=0$, nonnegativity of $S$
forces $V_M=0$ and the limit is zero, as in
Example~\ref{ex:flat-product}. Thus the value in the collapsed
one-ended case is $8\pi\chi(M)$.
When $V_M>0$, the preceding proposition and the upper bound in
\eqref{eq:main} also yield \eqref{eq:positive-volume-limit}.
Their lower bound holds under the weaker curvature condition
$\Ric\ge0$.

Both works use convex Busemann exhaustions and Gauss--Bonnet.
Their volume-growth argument uses Bujalo's convex-body inequality
and a radial Minkowski inequality. Our proof uses separate smooth
approximations of the Busemann and distance functions, with lower
and upper Hessian bounds, respectively. A determinant comparison
and the integrated Bochner identity give the upper estimate for
the integrated extrinsic Gaussian curvature. Its integrable negative
part and an averaged boundary estimate give the matching lower
estimate. Averaging the divergence identity then shows that the
normal Ricci integral grows sublinearly, yielding the scalar-curvature
limit directly.
The argument for the sharp upper bound was developed independently of
\cite{ChanLeeLi2026}. The limit formula is obtained by refining
that argument and retaining the Euler characteristic of the levels.
This proof yields \eqref{eq:positive-volume-limit} as a direct
consequence of the full formula, without using their lower bound.
\end{remark}

\textbf{Outline of the argument.}
Section~\ref{sec:smoothing} constructs a smooth approximation $f$
to the convex Busemann exhaustion, with a small negative Hessian
error, and a separate smooth distance approximation $u$, with a
Hessian upper bound. Their gradients approach each other, and
$|\nabla f|$ approaches one. Consequently, $f$ has no critical
points sufficiently far out. Its exterior is a product, and
one-endedness makes the large level sets connected. These surfaces
replace the geodesic spheres supplied by a pole.
Section~\ref{sec:levels} identifies their Euler characteristic and
proves the area limit. Section~\ref{sec:negative} controls the
negative curvature errors and the integrated intrinsic curvature.
Section~\ref{sec:identities} proves the normalized-gradient
divergence identity and an averaged boundary estimate. In
Section~\ref{sec:limits}, a two-dimensional determinant estimate
and the integrated Bochner formula give the upper estimate for
the integrated extrinsic curvature; the boundary average supplies
the matching lower estimate. A second use of the average proves
that the normal Ricci integral grows sublinearly. These limits give
the one-ended scalar-curvature formula directly.
Section~\ref{sec:two} treats two ends and gives the sharpness examples.
\section{Two smooth functions at infinity}\label{sec:smoothing}

Throughout this section, assume $K\ge0$. Constants denoted by $C$
may change from line to line and depend on the fixed metric,
basepoint, and chosen exhaustion, but not on the level parameter,
unless another dependence is stated explicitly.

We use the outward Busemann exhaustion based at $p$,
\begin{equation}\label{eq:busemann}
 b(x)=\sup_{\gamma}\lim_{T\to\infty}
 \bigl(T-d(x,\gamma(T))\bigr),
\end{equation}
where the supremum is over all unit-speed rays starting at $p$.
We use the following properties of $b$, recalled in
\cite[Section~3.1]{ChowLu}:
\begin{equation}\label{eq:b-asymp}
	b\le\rho,\qquad b/\rho\longrightarrow1
	\quad\text{uniformly as }\rho\to\infty,\qquad b\; \text{is 1-Lipschitz}.
\end{equation}
Here is the geometric reason for the uniform assertion. The compact
sets of initial directions of minimizing segments of length $r$
decrease, as $r\to\infty$, to the set of ray directions. Thus their
maximal angular distance $\omega(r)$ from ray directions tends to zero.
Hinge comparison, applied to a segment from $p$ to $x$ and a nearby
ray, gives $b(x)\ge\rho(x)\cos\omega(\rho(x))$ for large $\rho(x)$.
Together with $b\le\rho$, this proves \eqref{eq:b-asymp} and shows
that $b$ is a proper exhaustion bounded from below.

For the 1-Lipschitz assertion, fix a ray $\gamma$ and write $b_T(x)=T-d(x,\gamma(T))$.
The triangle inequality implies that $b_T(x)$ is nondecreasing in $T$
and satisfies $-\rho(x)\le b_T(x)\le\rho(x)$. Thus it converges to
a finite function $b_\gamma$. Each $b_T$ is $1$-Lipschitz, so
$b_\gamma$ is $1$-Lipschitz and convergence is uniform on compact sets. Since all $b_\gamma$ are $1$-Lipschitz and lie between
$-\rho$ and $\rho$, their supremum $b$ has the same bounds and is
$1$-Lipschitz.

Convexity is also a classical property of the Busemann construction on manifolds with nonnegative sectional curvature; see
\cite{CheegerGromollStructure} and \cite[Section~3.1]{ChowLu}. We give a proof here for completeness.

For clarity, Hessian comparison across a cut locus is understood
in the barrier sense, with an arbitrarily small error. More precisely,
write $r_o=d(o,\cdot)$ and fix $x\ne o$. For every $\epsilon>0$
there is a smooth upper support $r_{o,\epsilon}$ near $x$ such that
\[
 r_{o,\epsilon}\ge r_o,\qquad r_{o,\epsilon}(x)=r_o(x),\qquad
 \Hess r_{o,\epsilon}(x)
 \le\bigl(r_o(x)^{-1}+\epsilon\bigr)g_x.
\]
To obtain it, move $o$ a distance $s>0$ along a minimizing segment
from $o$ to $x$, call the new point $o_s$, and use
$s+d(o_s,\cdot)$. This function is smooth near $x$ and touches
$r_o$ from above, while its Hessian at $x$ is bounded by
$(r_o(x)-s)^{-1}g_x$. Taking $s$ sufficiently small gives the
displayed estimate. These barriers yield the weak bound
$\Hess r_o\le r_o^{-1}g$ and the corresponding upper second-derivative
bounds along geodesics. Reversing signs gives lower barriers and
the associated convexity statements. No classical Hessian of the
distance function is asserted at a cut point.

\Needspace{10\baselineskip}
\begin{lemma}[Convexity of the Busemann function]\label{lem:busemann-convex}
Suppose $K\ge0$. Each outward Busemann function
$b_\gamma(x)=\lim_{T\to\infty}(T-d(x,\gamma(T)))$ is geodesically
convex. The function $b$ in \eqref{eq:busemann}
has the same properties.
\end{lemma}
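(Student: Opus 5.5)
The plan is to prove convexity of each $b_\gamma$ through the barrier formulation of Hessian comparison just described, and then pass to the supremum. Convexity of a continuous function along geodesics is a one-dimensional statement. So it suffices to show that for every unit-speed geodesic $\sigma$ and every $t_0$, the function $\phi=b_\gamma\circ\sigma$ admits, for each $\epsilon>0$, a smooth lower support at $t_0$ with second derivative $\ge-\epsilon$. A continuous function on an interval with this property is convex. To see this, compare $\phi$ with $\ell(t)-\epsilon'(t-a)(c-t)$ on $[a,c]$, where $\ell$ is the chord; an interior maximum of the difference would contradict the support condition. Then let $\epsilon'\to0$.

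For the lower support we use the lower barrier of $-d(\cdot,\gamma(T))$. Fix $x=\sigma(t_0)$. By the displayed upper-barrier construction with $o=\gamma(T)$ and $\delta>0$, there is a smooth $r_{T,\delta}\ge d(\cdot,\gamma(T))$, equal to it at $x$, with
\[
\Hess r_{T,\delta}(x)\le\bigl(d(x,\gamma(T))^{-1}+\delta\bigr)g_x .
\]
Here $K\ge0$ enters: the comparison $\Hess r_o\le r_o^{-1}g$ is the flat Hessian comparison applied to the smooth distance from the shifted point $o_s$. Hence $T-r_{T,\delta}$ is a smooth lower support for $b_T=T-d(\cdot,\gamma(T))$ at $x$, and its second derivative along $\sigma$ at $t_0$ is $\ge-(d(x,\gamma(T))^{-1}+\delta)$. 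The quantity $d(x,\gamma(T))\to\infty$ as $T\to\infty$. We transfer this to the limit, which is the one delicate point, since the support is for $b_T$ and not for $b_\gamma$. The cleanest route is to prove convexity up to error for each $b_T$ first. On a fixed compact geodesic segment $\sigma|_{[a,c]}$, $d(\sigma(t),\gamma(T))\ge R_T\to\infty$ uniformly in $t$. So $b_T\circ\sigma$ has lower supports of second derivative $\ge-2/R_T$ at every point, and the comparison argument above gives
\[
b_T(\sigma(t))\le \ell_T(t)+R_T^{-1}(t-a)(c-t),
\]
where $\ell_T$ is the chord of $b_T\circ\sigma$. Letting $T\to\infty$ and using the locally uniform convergence $b_T\to b_\gamma$ established above gives the chord inequality for $b_\gamma$. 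This avoids needing supports for the limit directly.

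Finally, $b=\sup_\gamma b_\gamma$ is a pointwise supremum of geodesically convex functions. It is finite, indeed $1$-Lipschitz with $|b|\le\rho$, as shown before the lemma. The chord inequality passes to suprema, since $b_\gamma(\sigma(t))\le \ell_{b}(t)$ holds for each $\gamma$ because each $b_\gamma\le b$ at the endpoints. Thus $b$ is convex.

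The main obstacle is making the barrier-to-convexity step rigorous at cut points. I would handle it exactly with the $\epsilon'$-perturbed maximum-principle argument on intervals, applied to $b_T$ on compact segments where the barrier error is uniformly $O(1/R_T)$. That argument needs only continuity and the existence of smooth lower supports, which the shifted-point construction supplies everywhere except at $\gamma(T)$ itself, and this point is avoided for large $T$.
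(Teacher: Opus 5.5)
Your proposal is correct and follows the paper's route: the barrier form of Hessian comparison shows that each $b_T$ is convex up to an $O(1/T)$ quadratic error on compact geodesic segments; this error disappears as $T\to\infty$, and convexity then passes to the supremum over rays. The only difference is that you write out the maximum-principle step from barriers to convexity, which the paper simply states as an equivalence, and that step has a sign slip: the comparison function should be $\ell(t)+\epsilon'(t-a)(c-t)$, not $\ell(t)-\epsilon'(t-a)(c-t)$, as your later display $b_T(\sigma(t))\le\ell_T(t)+R_T^{-1}(t-a)(c-t)$ correctly reflects.
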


\begin{proof}
On a relatively compact open set $U$, let $R_U=\sup_U\rho$.
For $T>R_U$, fix a ray $\gamma$ and write $b_T(x)=T-d(x,\gamma(T))$. Distance Hessian comparison, with its weak extension
across the cut locus, gives
\[
 \Hess b_T\ge-\frac1{d(\cdot,\gamma(T))}g
 \ge-\frac1{T-R_U}g \quad\text{on }U.
\]
Equivalently, along any unit-speed geodesic $\sigma:[0,\ell]\to U$,
the function $b_T(\sigma(s))+s^2/(2(T-R_U))$ is convex.

Passing to the limit proves geodesic convexity of $b_\gamma$.
Taking the supremum over rays in this inequality proves convexity
of $b$.
\end{proof}

We record a form of Greene--Wu's theorem adapted to the present proof
\cite[Propositions~2.2--2.3, pp.~61--62]{GreeneWu}.
For a continuous function $\eta$ on a Riemannian manifold $U$, call a
continuous function $h$ \emph{strictly $\eta$-convex} if, for every
$x\in U$, there are a neighborhood of $x$ and a number $\delta_x>0$
on which
\[
 y\longmapsto h(y)-\tfrac12(\eta(x)+\delta_x)d(x,y)^2
\]
is geodesically convex. For smooth $h$, this condition is equivalent
to $\Hess h>\eta g$ as quadratic forms.

\begin{theorem*}[Greene--Wu fine approximation]
Let $h$ be continuous and strictly $\eta$-convex on $U$, with $\eta$
continuous. Given any continuous $\delta:U\to(0,\infty)$, there exists
$\widetilde h\in C^\infty(U)$ such that
\[
 |\widetilde h-h|<\delta,\qquad \Hess\widetilde h>\eta g.
\]
If $h$ has local Lipschitz constants strictly less than a fixed $B>0$,
the same approximation can also satisfy $|\nabla\widetilde h|<B$.
\end{theorem*}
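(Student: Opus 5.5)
The plan is to prove the statement by the classical local mollification scheme of Greene and Wu. We convolve $h$ in normal coordinates, control the Hessian locally, and then glue the local pieces with a partition of unity. The gluing errors are made small by choosing the local approximations close to $h$ in $C^0$ first.

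\emph{Step 1 (local smoothing).} Fix $x\in U$ and a small convex ball $W$ about $x$ on which $h-\tfrac12(\eta(x)+\delta_x)d(x,\cdot)^2$ is geodesically convex. By continuity of $\eta$ we may shrink $W$ so that $\eta<\eta(x)+\delta_x/4$ on $W$. For $\epsilon>0$ define the Riemannian convolution
\[
 h_\epsilon(y)=\int_{T_yM}h(\exp_y v)\,\phi_\epsilon(|v|)\,dv ,
\]
with $\phi_\epsilon$ a standard radial mollifier. Greene--Wu's key computation shows three things. First, $h_\epsilon\to h$ uniformly. Second, the Lipschitz constant of $h_\epsilon$ is asymptotically at most that of $h$. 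Third, the second derivative of $h_\epsilon$ along any geodesic is an average of second difference quotients of $h$ along nearby curves, up to errors $o(1)$ as $\epsilon\to0$ that are controlled by the curvature and Jacobi fields on $W$. Hence $\Hess h_\epsilon\ge(\eta(x)+\delta_x/2)g>\eta g$ on a slightly smaller ball for small $\epsilon$. The gradient bound $|\nabla h_\epsilon|<B$ follows because the local Lipschitz constant of $h$ is $<B$.

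\emph{Step 2 (gluing).} Choose a locally finite cover $\{W_i\}$ by such balls and a subordinate partition of unity $\{\psi_i\}$. Set $\widetilde h=\sum_i\psi_ih_i$ with $h_i=h_{\epsilon_i}$. Then
\[
 \Hess\widetilde h=\sum\psi_i\Hess h_i+2\sum d\psi_i\odot\nabla(h_i-h_j)+\sum(h_i-h)\Hess\psi_i .
\]
The middle sum is obtained after using $\sum d\psi_i=0$; it needs $C^1$ closeness of the $h_i$ to one another. Only $C^0$ closeness is available, so instead I would follow Greene--Wu's device. Because $\sum\Hess\psi_i=0$ and $\sum d\psi_i=0$, the error terms reduce to $\sum(h_i-h)\Hess\psi_i$ and $\sum d\psi_i\otimes d(h_i-h)$. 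The first is small once $\sup|h_i-h|$ is small relative to the fixed $C^2$ norms of the $\psi_i$. For the second, I would exploit that on an overlap the convex functions $h_i$ and $h_j$ are uniformly close, so their differentials are close in an averaged sense. More simply, I would smooth sequentially: at the $k$-th step, mollify only on the support of $\psi_k$ the function already built. This is Greene--Wu's inductive scheme, in which strict convexity of the current function absorbs the cross terms because they are $O(\sup|h_k-h_{k-1}|)$ times fixed constants. Each step keeps a margin of strict inequality, halving the available slack, so the strict inequality survives the infinitely many steps. By local finiteness, the final function is smooth.

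\emph{Step 3 (bounds).} At each stage choose $\epsilon_k$ so that three conditions hold: the $C^0$ change is below $2^{-k}\min\delta$ on the relevant compact set, the Hessian lower bound loses at most $2^{-k}$ of the local margin, and the gradient stays $<B$. Summing the changes gives $|\widetilde h-h|<\delta$.

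The main obstacle is Step 2, the control of the cross term $d\psi\otimes d(h_\epsilon-h)$. The function $h$ is only Lipschitz, so its differential is not close to that of $h_\epsilon$ pointwise. I would first try the Greene--Wu trick of mollifying the already-smooth partial sum on successive pieces. In that scheme the cross term becomes $d\psi_k\otimes d\bigl((H_{k-1})_\epsilon-H_{k-1}\bigr)$ with $H_{k-1}$ smooth, so it tends to $0$ in $C^1$ as $\epsilon\to0$, and the difficulty disappears.
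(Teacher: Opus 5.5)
\textbf{The gap is in Step~2.} The paper does not prove this statement; it quotes it from Greene--Wu (Propositions~2.2--2.3). Your Step~1 is their local convolution lemma, which you also quote rather than prove. So everything rests on Step~2, and there your resolution of the obstacle you correctly identified does not work.

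In the sequential scheme you assert that the cross term is $d\psi_k\otimes d\bigl((H_{k-1})_\epsilon-H_{k-1}\bigr)$ with $H_{k-1}$ smooth. But $H_{k-1}$ is smooth only near the sets on which the earlier cutoffs were identically $1$. The transition region $\{0<\psi_k<1\}$ necessarily reaches into territory not yet smoothed. For $k=1$ one has $H_0=h$, which in general is smooth nowhere. On that region $d\bigl((H_{k-1})_\epsilon-H_{k-1}\bigr)$ is neither classically defined nor small. For instance, for $h(x)=|x_1|+|x|^2$ on $\R^n$, $\partial_{x_1}(h_\epsilon-h)$ is close to $\mp1$ at points with $0<\pm x_1\ll\epsilon$, for every $\epsilon$.

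The same example refutes your other justification, that the cross terms are $O(\sup|h_k-h_{k-1}|)$ times fixed constants. The term $2\,d\psi_k\odot d(h_k-h_{k-1})$ is first order, and $C^0$ closeness does not control it. Likewise, $d(h_i-h)$ in your partition-of-unity formula has no meaning for nonsmooth $h$. Making strict convexity absorb this term would need a genuine estimate exploiting the convexity of $h$ near its kinks, and none is supplied.

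\textbf{The missing idea.} Glue by a regularized maximum instead of a partition of unity; this is Richberg's technique.
\begin{enumerate}
\item Choose locally finite, relatively compact $V_i$ on which Step~1 applies. Choose compact cores $K_i\subset V_i$ whose interiors cover $U$. Choose smooth $\theta_i\ge0$ with $\theta_i=0$ on $K_i$ and $\theta_i=1$ near $\partial V_i$.
\item Take $\epsilon_i<\delta$ small, and comparable for overlapping indices. Set $u_i=h_i-\epsilon_i\theta_i$, where $|h_i-h|<\epsilon_i/4$ on $V_i$. For $\epsilon_i$ small enough, $u_i$ still satisfies $\Hess u_i>\eta g$ and $|\nabla u_i|<B$.
\item Near $\partial V_i$ one has $u_i<h-\tfrac34\epsilon_i$, while $u_j=h_j>h-\tfrac14\epsilon_j$ on $K_j$. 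Hence, once the smoothing parameter of the maximum is small, each $u_i$ drops out of $\widetilde h:=\widetilde{\max}_i\,u_i$ near $\partial V_i$, and $\widetilde h$ is smooth.
\item The regularized maximum $M$ is convex, nondecreasing in each variable, and satisfies $\sum_j\partial_jM=1$. Therefore $\Hess\widetilde h\ge\sum_j\partial_jM\,\Hess u_j>\eta g$ and $|\nabla\widetilde h|\le\max_j|\nabla u_j|<B$. Also $|\widetilde h-h|<\delta$.
\end{enumerate}
No first-order cross term ever arises, which is exactly what your partition-of-unity approach could not avoid.
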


In particular, a weak Hessian bound $\Hess h\ge\lambda g$, with
$\lambda$ continuous, implies strict $\eta$-convexity whenever
$\eta<\lambda$. This follows locally by continuity and
$\Hess(\tfrac12d(x,\cdot)^2)|_x=g_x$.
The theorem permits a positive error tolerance depending on position;
it does not require a uniform injectivity-radius lower bound.

\begin{lemma}[Smooth approximations]\label{lem:smoothing}
There exist a smooth exhaustion $f$ on $M$ and a smooth function
$u$ outside a compact set such that
\begin{align}
 |f-b|&<1,& |\nabla f|&<2,&
 \Hess f&\ge-(1+\rho)^{-3}g,\label{eq:f-smooth}\\
 |u-\rho|&<1,&&&
 \Hess u&\le(\rho^{-1}+\rho^{-2})g.\label{eq:u-smooth}
\end{align}
\end{lemma}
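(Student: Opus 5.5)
We construct $f$ and $u$ separately, both via the Greene--Wu fine approximation theorem, each applied to a function with a weak Hessian bound that has a little room.

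\emph{The function $f$.} By Lemma~\ref{lem:busemann-convex}, $b$ is convex, so $\Hess b\ge0$ in the barrier sense. Set $\eta=-(1+\rho)^{-3}$, continuous and strictly negative. Then $b$ is strictly $\eta$-convex on all of $M$: at each $x$ take $\delta_x=\tfrac12(1+\rho(x))^{-3}$. Near $x$, the function $y\mapsto b(y)-\tfrac12(\eta(x)+\delta_x)d(x,y)^2$ is convex, since $b$ is convex and $-(\eta(x)+\delta_x)>0$ while $d(x,\cdot)^2$ is smooth and convex near $x$ after shrinking the neighborhood. By \eqref{eq:b-asymp}, $b$ is $1$-Lipschitz, so its local Lipschitz constants are $<2$. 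Greene--Wu with $\delta\equiv 1$ and $B=2$ gives a smooth $f$ with $|f-b|<1$, $|\nabla f|<2$ and $\Hess f>\eta g$, which is \eqref{eq:f-smooth}. Since $b$ is a proper exhaustion bounded below and $|f-b|<1$, $f$ is also a proper exhaustion.

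\emph{The function $u$.} Apply the theorem to $h=-\rho$ on $U=M\setminus\overline{B_p(1)}$. The barriers constructed before Lemma~\ref{lem:busemann-convex} (moving $p$ slightly along a minimizing segment) show that $\rho$ has smooth upper supports at $x$ with Hessian $\le(\rho(x)^{-1}+\epsilon)g_x$ for every $\epsilon>0$. Hence $-\rho$ has lower supports with Hessian $\ge-(\rho(x)^{-1}+\epsilon)g_x$.

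Take $\eta=-(\rho^{-1}+\rho^{-2})$, which is continuous on $U$. At $x$ choose $\delta_x=\tfrac13\rho(x)^{-2}$ and $\epsilon=\tfrac13\rho(x)^{-2}$. The lower support $\phi$ is smooth with $\Hess\phi(x)\ge-(\rho(x)^{-1}+\epsilon)g_x$. After adding $-\tfrac12(\eta(x)+\delta_x)d(x,\cdot)^2$, its Hessian at $x$ is at least $\tfrac13\rho(x)^{-2}g_x>0$, so it is convex on a neighborhood.

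The main obstacle is passing from convexity of lower supports at each point to geodesic convexity of $-\rho-\tfrac12(\eta(x)+\delta_x)d(x,\cdot)^2$ on a \emph{fixed} neighborhood of $x$. Lower supports at every point of that neighborhood are needed, with uniform slack. The barriers at nearby points $y$ give $\Hess\ge-(\rho(y)^{-1}+\epsilon)$, and the correction term has Hessian close to $-(\eta(x)+\delta_x)g$ near $x$. By continuity of $\rho$ the combined lower bound stays positive on a small neighborhood. A continuous function with a smooth lower support of positive Hessian at every point is convex along geodesics, by the standard one-variable argument. This yields strict $\eta$-convexity.

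Greene--Wu with $\delta\equiv1$ then gives $\widetilde h$ smooth on $U$ with $|\widetilde h+\rho|<1$ and $\Hess\widetilde h>\eta g$. Setting $u=-\widetilde h$ gives \eqref{eq:u-smooth} outside the compact set $\overline{B_p(1)}$.
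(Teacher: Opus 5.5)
Your proposal is correct and follows essentially the same route as the paper. The paper applies Greene--Wu fine approximation to the convex function $b$ with $\eta=-(1+\rho)^{-3}$, tolerance $1$ and Lipschitz threshold $2$, and to $-\rho$ on an exterior region with $\eta=-\rho^{-1}-\rho^{-2}$ via the moved-basepoint upper barriers for $\rho$. Your explicit choices of $\delta_x$, and your check that lower supports at nearby points keep uniform positive slack, spell out the continuity remark the paper makes right after stating the Greene--Wu theorem.
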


\begin{proof}
Apply the fine smooth approximation theorem for variable
$\eta$-convex functions, with simultaneous local Lipschitz control,
from \cite[Propositions~2.2--2.3]{GreeneWu} to $b$, which is convex
by Lemma~\ref{lem:busemann-convex}.
Use $\eta=-(1+\rho)^{-3}$ and the strict Lipschitz threshold $2$.
This gives \eqref{eq:f-smooth}; the bounded approximation error
preserves properness.

On $M\setminus\{p\}$, Hessian comparison gives
$\Hess\rho\le\rho^{-1}g$ in the weak, upper-support sense, including
the cut locus. Thus $-\rho$ satisfies the strict lower Hessian
condition with continuous threshold
$\eta=-\rho^{-1}-\rho^{-2}$. Applying the same fine approximation
theorem to $-\rho$ on an exterior open set, and reversing the sign,
gives \eqref{eq:u-smooth}. The theorem permits spatially varying
smoothing scales; no uniform injectivity radius is needed.
\end{proof}

\begin{lemma}[Asymptotic first derivatives]\label{lem:gradients}
The functions in Lemma~\ref{lem:smoothing} satisfy, uniformly at infinity,
\begin{equation}\label{eq:gradient-limits}
 \frac f\rho\longrightarrow1,\qquad
 q:=|\nabla f|\longrightarrow1,\qquad
 |\nabla u-\nabla f|\longrightarrow0.
\end{equation}
\end{lemma}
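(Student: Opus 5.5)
The plan is to obtain all three limits from one-sided Taylor expansions along geodesics of length $t=o(\rho(x))$, combined with the zeroth-order bounds in \eqref{eq:f-smooth}--\eqref{eq:u-smooth} and \eqref{eq:b-asymp}. The first limit is immediate: $|f-b|<1$ and $b/\rho\to1$ uniformly give $f/\rho\to1$ uniformly.

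Set $\delta(x):=\rho(x)-b(x)\ge0$. By \eqref{eq:b-asymp}, $\delta=o(\rho)$ uniformly. Fix $x$ with $\rho(x)$ large and a unit-speed geodesic $\sigma$ from $x$. Take $0\le s\le t\le\rho(x)/2$. Then $\rho(\sigma(s))\ge\rho(x)/2$, so along $\sigma$ the Hessian bounds give
\[
 f(\sigma(t))\ge f(x)+t\langle\nabla f,\sigma'(0)\rangle-Ct^2\rho(x)^{-3},\qquad
 u(\sigma(t))\le u(x)+t\langle\nabla u,\sigma'(0)\rangle+Ct^2\rho(x)^{-1}.
\]

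\emph{Comparison of gradients.} The bounds $f\le b+1\le\rho+1\le u+2$ hold everywhere outside the compact set. At $x$ we also have $u(x)-f(x)\le\rho(x)+1-b(x)+1=\delta(x)+2$. Subtracting the two expansions at $\sigma(t)$ gives
\[
 t\langle\nabla f-\nabla u,\sigma'(0)\rangle\le 2+u(x)-f(x)+Ct^2/\rho\le 4+\delta(x)+Ct^2/\rho .
\]
Choose $\sigma'(0)=(\nabla f-\nabla u)/|\nabla f-\nabla u|$ and $t=\sqrt{\rho(x)(4+\delta(x))}$; note that $t=o(\rho)$. Then
\[
 |\nabla f-\nabla u|\le (4+\delta)/t+Ct/\rho\le C\sqrt{(4+\delta)/\rho}\to0
\]
uniformly.

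\emph{Upper bound for $q$.} Take $\sigma'(0)=\nabla f/|\nabla f|$. Since $b$ is $1$-Lipschitz,
\[
 f(\sigma(t))\le b(x)+t+1\le f(x)+t+2,
\]
and the lower Taylor bound then gives $q\le1+2/t+Ct/\rho^3$. Choosing, say, $t=\sqrt{\rho(x)}$ yields $\limsup q\le1$.

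\emph{Lower bound for $q$.} This is the step where the choice of function matters. The upper Hessian bound on $u$ controls $u(\sigma(t))$ only from above, so it cannot force $|\nabla u|$ to be large. Instead I would use $f$ again, now along the minimizing segment $\sigma$ from $x$ toward $p$. Along this segment $\rho(\sigma(t))=\rho(x)-t$, so
\[
 f(\sigma(t))\le\rho(x)-t+1=b(x)+\delta(x)-t+1\le f(x)+\delta(x)+2-t .
\]
With the lower Taylor bound this gives
\[
 \langle\nabla f,\sigma'(0)\rangle\le -1+(\delta+2)/t+Ct/\rho^3 .
\]
Taking $t=\sqrt{\rho(\delta+2)}=o(\rho)$ yields $q\ge1-o(1)$.

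Two points need care. First, the segment must stay in the region where $u$ is defined and where $\rho\ge\rho(x)/2$; this holds because $t=o(\rho)$. Second, the Taylor inequalities must be applied to smooth functions, which $f$ and $u$ are, so no barrier argument is needed here. Uniformity in all three limits follows because the only input, $\delta/\rho\to0$, is uniform by \eqref{eq:b-asymp}.
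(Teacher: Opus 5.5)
Your proof is correct, and all three limits follow as you claim. The gradient comparison works the same way as in the paper. The function $w=f-u$ satisfies $\Hess w\ge -C\rho^{-1}g$ near $x$, its relevant oscillation is $o(\rho)$, and a step of length $\sqrt{\text{oscillation}\cdot\rho}$ in the direction of $\nabla w$ forces $|\nabla w(x)|\to0$. Your one-sided bookkeeping ($f-u<2$ wherever $u$ is defined, and $u(x)-f(x)<\delta(x)+2$) simply replaces the paper's two-sided quantity $m(x)=\sup_{B(x,r/4)}|w|$.

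You genuinely differ from the paper in the bounds for $q$.

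\emph{Lower bound.} The paper integrates by parts along the whole minimizing segment from $p$ to $x$ and obtains the explicit estimate $q(x)\ge b(x)/r-5/(2r)$. This uses the finiteness of $\int_0^\infty s(1+s)^{-3}\,ds$, i.e.\ the specific decay of the Hessian error all the way down to the basepoint. You instead work only on a segment of length $o(\rho(x))$ toward $p$, using $f\le\rho+1$ there and $f(x)\ge\rho(x)-\delta(x)-1$.

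\emph{Upper bound.} The paper follows the $\nabla f$-geodesic for length $r/2$, compares with $f\le\rho+1\le 3r/2+1$, and then invokes $f/r\to1$. You again use a short segment, and replace that comparison with the $1$-Lipschitz property of $b$.

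Because your segments are short, you only need $\Hess f\ge -C\rho^{-1}g$ on $\{\rho\ge\rho(x)/2\}$. Under so weak a bound, the paper's arguments as written would break down, since their error terms become of order $r$. Your version is therefore more robust to the quality of the smoothing. The paper's version gives simpler explicit inequalities by exploiting the strong decay $(1+\rho)^{-3}$, which the Greene--Wu approximation provides anyway.
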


\begin{proof}
The first assertion follows from \eqref{eq:b-asymp} and
\eqref{eq:f-smooth}. Let $x$ have $\rho(x)=r$, and let
$\gamma:[0,r]\to M$ be a unit-speed minimizing geodesic from $p$ to $x$.
Integration by parts gives
\begin{align*}
 r\langle\nabla f(x),\dot\gamma(r)\rangle
 &=f(x)-f(p)+\int_0^r s\Hess f(\dot\gamma,\dot\gamma)\dd s\\
 &\ge b(x)-2-\int_0^\infty s(1+s)^{-3}\dd s.
\end{align*}
The integral equals $1/2$, so
\[
 q(x)\ge\frac{b(x)}r-\frac{5}{2r}.
\]
Together with \eqref{eq:b-asymp}, this proves that, for every
$\epsilon>0$, $q\ge1-\epsilon$ outside a compact set.

For the upper bound, the preceding estimate ensures $q(x)>0$ when
$r$ is sufficiently large. Let $\gamma_x$ be the unit-speed geodesic
determined by
\[
 \gamma_x(0)=x,\qquad
 \dot\gamma_x(0)=\frac{\nabla f(x)}{q(x)},\qquad
 \nabla_{\dot\gamma_x}\dot\gamma_x=0,
\]
and set $y=\gamma_x(r/2)$. Completeness ensures that this segment
exists; it need not minimize distance. For $0\le s\le r/2$,
\[
 \rho(\gamma_x(s))\ge r-d(x,\gamma_x(s))\ge r-s\ge r/2.
\]
Put $F(s)=f(\gamma_x(s))$. The initial velocity and the geodesic
equation give
\[
 F'(0)=q(x),\qquad
 F''(s)=\Hess f(\dot\gamma_x(s),\dot\gamma_x(s)).
\]
Since $|\dot\gamma_x|=1$, \eqref{eq:f-smooth} implies
\[
 F''(s)\ge-(1+\rho(\gamma_x(s)))^{-3}
 \ge-(1+r/2)^{-3}\ge-8r^{-3}.
\]
The first integration yields $F'(s)\ge q(x)-8s/r^3$.
Integrating once more over $[0,r/2]$ therefore gives
\begin{align*}
 f(y)-f(x)
 &=\int_0^{r/2}F'(s)\dd s\\
 &\ge\frac r2q(x)-\frac8{r^3}\int_0^{r/2}s\dd s
 =\frac r2q(x)-\frac1r.
\end{align*}
On the other hand, $f(y)\le\rho(y)+1\le3r/2+1$. Combining these
inequalities and rearranging yields
\[
 q(x)\le3-2\frac{f(x)}r+\frac2r+\frac2{r^2}.
\]
Since $f(x)/r\to1$ uniformly, this proves the upper gradient limit.

It remains to compare the two gradients. Put $w=f-u$.
The limits $f/\rho\to1$ and $|u-\rho|<1$ imply that
$|w|/\rho\to0$ uniformly. For sufficiently large $r$, the ball
$B(x,r/4)$ lies in the domain of $u$. Every $z\in B(x,r/4)$ satisfies
$3r/4\le\rho(z)\le5r/4$. Subtracting the upper Hessian bound for $u$
from the lower Hessian bound for $f$ gives
\begin{align*}
 \Hess w
 &\ge-\bigl((1+\rho)^{-3}+\rho^{-1}+\rho^{-2}\bigr)g\\
 &\ge-\frac Cr g\quad\text{on }B(x,r/4),
\end{align*}
where $C$ is independent of $x$ and $r$ for all sufficiently large $r$.
Define
\[
 m(x)=\sup_{B(x,r/4)}|w|,\qquad
 \delta(R)=\sup_{\{\rho\ge R\}}\frac{|w|}{\rho}
\]
for sufficiently large $R$. Then $\delta(R)\to0$, and the radial
bounds on this ball show that
\[
 0\le\frac{m(x)}r\le\frac54\delta(3r/4).
\]
In particular, $m(x)/r$ tends to zero uniformly as $\rho(x)=r\to\infty$.

If $\nabla w(x)=0$, the desired gradient estimate is immediate.
Otherwise, take the unit-speed geodesic $\sigma$ with initial data
\[
 \sigma(0)=x,\qquad
 \dot\sigma(0)=\frac{\nabla w(x)}{|\nabla w(x)|}.
\]
For any $0<\ell<r/4$, $d(x,\sigma(s))\le s<r/4$ on $[0,\ell]$,
so this segment remains in the ball where the Hessian bound holds.
Let $W(s)=w(\sigma(s))$. Then
\[
 W'(0)=|\nabla w(x)|,\qquad
 W''(s)=\Hess w(\dot\sigma(s),\dot\sigma(s))\ge-C/r.
\]
Integrating successively gives
\begin{align*}
 W'(s)&\ge|\nabla w(x)|-Cs/r,\\
 W(\ell)-W(0)&\ge\ell|\nabla w(x)|-\frac{C\ell^2}{2r}.
\end{align*}
Both endpoints lie in $B(x,r/4)$, so
$W(\ell)-W(0)\le|W(\ell)|+|W(0)|\le2m(x)$.
Rearranging the second inequality and dividing by $\ell$ yields
\[
 |\nabla w(x)|\le\frac{2m(x)}\ell+\frac{C\ell}{2r}.
\]
When $m(x)>0$, choose $\ell=\sqrt{m(x)r}$. The uniform convergence
of $m(x)/r$ to zero ensures $\ell<r/4$ for all sufficiently large $r$.
Substitution then gives the explicit bound
\[
 |\nabla w(x)|\le\left(2+\frac C2\right)\sqrt{\frac{m(x)}r},
\]
whose right side tends to zero uniformly.
This proves $|\nabla u-\nabla f|\to0$ uniformly at infinity.
In particular, no $C^1$ approximation of the nonsmooth distance
function has been assumed.
\end{proof}

\section{Large level sets and their area}\label{sec:levels}

Throughout Sections~\ref{sec:levels}--\ref{sec:limits},
assume that $M$ has one end.
Choose $a>2$ so large that $u$ is defined and
\[
 \tfrac12\le q\le2
 \quad\text{on }\{f\ge a\}.
\]
All levels $\Sigma_t=f^{-1}(t)$, $t\ge a$, are closed smooth surfaces.
The flow of $\nabla f/q^2$ identifies the exterior with
$\Sigma_a\times[a,\infty)$. Properness of $f$ ensures that this flow
exists across every finite slab. Every component of $\Sigma_a$
therefore gives an end: for every $T\ge a$, the components of
$\{f>T\}$ are exactly the products of the components of $\Sigma_a$
with $(T,\infty)$. Each is unbounded, and they remain distinct for
all larger $T$. Since $\{f\le T\}$ is a compact exhaustion, their
number is the number of ends. Thus each $\Sigma_t$ is connected.
In particular, this conclusion uses the absence of critical points
on the entire exterior, not merely the one-ended hypothesis.

\begin{lemma}[Euler characteristic of the end]\label{lem:end-euler}
For $t\ge a$, the number $\chi_\infty:=\chi(\Sigma_t)$ is independent
of $t$ and satisfies
\begin{equation}\label{eq:end-euler}
 \chi_\infty=2\chi(M)\in\{0,2\}.
\end{equation}
In particular, it does not depend on the choice of a smooth exhaustion
whose sufficiently large levels have the product structure above.
\end{lemma}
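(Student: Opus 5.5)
Independence of $t$ comes from the product structure established above: the flow of $\nabla f/q^2$ carries $\Sigma_a$ diffeomorphically onto each $\Sigma_t$, so all these surfaces are diffeomorphic and $\chi(\Sigma_t)$ is constant. The same argument shows the value does not depend on the exhaustion. If two smooth exhaustions both have product-structured large levels, each large level is a compact boundary of a collar neighborhood of the unique end. By interleaving sublevel sets, the two collars contain one another, so the region between two such levels is an $h$-cobordism-type product piece. In particular, each level is a deformation retract of a neighborhood of infinity. Hence $\chi$ of either level equals $\chi$ of the same end-neighborhood $\Sigma\times[a,\infty)$.

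For the identification with $\chi(M)$, let $W=\{f\le t\}$. This is a compact $3$-manifold with boundary $\Sigma_t$, and $M$ deformation retracts onto $W$, since the exterior is the collar $\Sigma_t\times[t,\infty)$. Therefore $\chi(M)=\chi(W)$. The standard doubling formula for odd-dimensional manifolds then gives the relation. The double $DW$ is a closed $3$-manifold, so $\chi(DW)=0$. Mayer--Vietoris gives $\chi(DW)=2\chi(W)-\chi(\partial W)$. Hence
\[
 \chi(\Sigma_t)=2\chi(W)=2\chi(M).
\]
Note that $\Sigma_t$ is connected, by the one-end argument preceding the lemma, and it is a closed surface. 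It need not a priori be orientable, although $M$ itself may be nonorientable.

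It remains to show $\chi(M)\in\{0,1\}$. Here I would use the soul theorem \cite{CheegerGromollStructure}: $M$ is diffeomorphic to the normal bundle of a soul $S$, which is a closed totally geodesic submanifold with $\dim S\in\{0,1,2\}$. Then $\chi(M)=\chi(S)$.

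\begin{itemize}
\item If $\dim S=0$, then $\chi(M)=1$.
\item If $\dim S=1$, then $S$ is a circle and $\chi(M)=0$.
\item If $\dim S=2$, then $S$ is a closed surface with $K_S\ge0$ (Gauss equation, totally geodesic). Its normal bundle is a line bundle. If the line bundle is trivial, $M=S\times\R$ has two ends, which is excluded. If it is nontrivial, the unit normal sphere bundle, a connected double cover of $S$, is a level $\Sigma$, so $\chi_\infty=2\chi(S)$, consistent with $\chi(M)=\chi(S)$. Gauss--Bonnet and $K_S\ge0$ give $\chi(S)\in\{0,1,2\}$.
\end{itemize}

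To exclude $\chi(S)=2$, note that $S=\Sph^2$ has no nontrivial real line bundle, since $\Sph^2$ is simply connected. So a one-ended $M$ with a $2$-dimensional soul has $S\in\{\R P^2,\text{torus},\text{Klein bottle}\}$, giving $\chi(M)\in\{1,0\}$.

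The main obstacle is the nonorientable $2$-dimensional soul case, that is, checking that the double-cover structure of the end is compatible with connectedness and one-endedness. I would handle it directly via the trivial/nontrivial line-bundle dichotomy above. One could alternatively use $\chi_\infty\le2$, since $\Sigma_t$ is a connected closed surface, together with the parity constraint $\chi_\infty=2\chi(M)$ to get $\chi(M)\le1$. But excluding negative values still needs the soul, so the soul argument is the route I would commit to.
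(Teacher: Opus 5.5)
Your proof is correct, and its skeleton (constancy of $\chi(\Sigma_t)$ via the level flow, retraction of $M$ onto $W=\{f\le t\}$, the doubling identity $0=2\chi(W)-\chi(\Sigma_t)$, and the soul theorem) is the paper's.

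The genuine difference is the upper bound $\chi(M)\le1$. The paper gets it in one line from the connectedness of $\Sigma_t$, which was established before the lemma. A connected closed surface has $\chi\le2$, so $2\chi(M)=\chi(\Sigma_t)\le2$. The soul theorem with Gauss--Bonnet is then needed only for $\chi(M)\ge0$.

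You instead extract both bounds from the soul. You use one-endedness to rule out a trivial normal line bundle over a two-dimensional soul, and $H^1(\Sph^2;\mathbb Z_2)=0$ to rule out $S=\Sph^2$. This is valid and yields more: a list of the possible souls, and $\chi(M)\in\{0,1\}$ for every one-ended $M^3$ with $K\ge0$ without reference to $f$. The cost is the line-bundle classification, whereas the paper's route simply reuses connectedness.

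Two minor points. First, the unit normal bundle of the soul is not literally a level of $f$. This is harmless, since that remark is only a consistency check. Second, your first-paragraph argument for independence of the exhaustion is loose. Two exhaustions give different end-neighborhoods, and the ``$h$-cobordism-type product piece'' is asserted rather than proved. It is also unnecessary: once $\chi_\infty=2\chi(M)$ holds for any exhaustion with product-structured levels, independence is immediate because $\chi(M)$ is intrinsic. That is exactly how the paper obtains the ``in particular'' clause.
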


\begin{proof}
The level flow identifies all the $\Sigma_t$, so their Euler
characteristic is constant. Let $C_t=\{f\le t\}$. This is a compact
three-manifold with boundary $\Sigma_t$. The exterior product
structure gives a deformation retraction of $M$ onto $C_t$;
explicitly, one moves each exterior point backwards along the level
flow to level $t$ and fixes $C_t$. Thus $M$ has finite homotopy type
and $\chi(M)=\chi(C_t)$.

Double $C_t$ along its boundary. The resulting closed three-manifold
has Euler characteristic zero, by Poincar\'e duality with coefficients
in $\mathbb Z_2$, whether or not it is orientable. Additivity of the
Euler characteristic therefore gives
\[
 0=2\chi(C_t)-\chi(\Sigma_t).
\]
Hence $\chi_\infty=2\chi(M)\le2$, since $\Sigma_t$ is a closed
connected surface. It remains to show that $\chi(M)\ge0$.
By the Cheeger--Gromoll soul theorem \cite{CheegerGromollStructure},
$M$ is diffeomorphic to the normal bundle of a compact, connected,
totally geodesic soul $S_0$ of dimension at most two. Thus
$\chi(M)=\chi(S_0)$. If $\dim S_0=0$, this number is $1$;
if $\dim S_0=1$, it is $0$. If $\dim S_0=2$, the induced Gaussian
curvature is nonnegative, so Gauss--Bonnet gives $\chi(S_0)\ge0$.
Consequently $0\le\chi(M)\le1$, proving \eqref{eq:end-euler}.
\end{proof}

Let $\nu=\nabla f/q$, and let $\Pi$ denote the second fundamental
form with the convention $\Pi(X,Y)=\langle\nabla_X\nu,Y\rangle$.
Write $\kappa_1,\kappa_2$ for its principal curvatures, and set
\[
 H=\kappa_1+\kappa_2,\quad D=\kappa_1\kappa_2,\quad
 A(t)=\Area(\Sigma_t),\quad Q(t)=\int_{\Sigma_t}H\dd A.
\]
We use $K_\Sigma$ for the intrinsic Gaussian curvature on the leaf,
viewed also as a function on $\{f\ge a\}$, and $K_T$ for the ambient
sectional curvature of its tangent plane. Thus
\begin{equation}\label{eq:gauss-leaf}
 K_\Sigma=K_T+D,\qquad
 \int_{\Sigma_t}K_\Sigma\dd A=2\pi\chi_\infty\le4\pi.
\end{equation}
Gauss--Bonnet in this form also holds for nonorientable surfaces.

For tangent vectors $X,Y$, differentiating $\nu=\nabla f/q$ gives
\[
 \Pi(X,Y)=q^{-1}\Hess f(X,Y),
\]
because the term involving the derivative of $q^{-1}$ is normal.
On $\Sigma_t$, $t=f<1+\rho$ and $q\ge1/2$, so
$\Pi\ge-2(1+\rho)^{-3}g_{\Sigma_t}\ge-2t^{-3}g_{\Sigma_t}$.
Thus
\begin{equation}\label{eq:principal-lower}
 \kappa_i\ge-2t^{-3},\qquad H\ge-4t^{-3}
 \quad\text{on }\Sigma_t.
\end{equation}
The level flow has normal speed $1/q$, and therefore
\begin{equation}\label{eq:first-variation}
 A'(t)=\int_{\Sigma_t}\frac Hq\dd A\ge-8t^{-3}A(t).
\end{equation}

\begin{lemma}[Area asymptotics, including collapse]\label{lem:area}
One has
\begin{equation}\label{eq:area}
 \lim_{t\to\infty}\frac{A(t)}{t^2}=4\pi{V_M}.
\end{equation}
In particular, there is a constant $C$ such that $A(t)\le Ct^2$
for all $t\ge a$, including when ${V_M}=0$.
\end{lemma}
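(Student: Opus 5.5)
Set $\Omega_t=\{f\le t\}$, write $V(t)=\Vol\Omega_t$, and compare $V(t)$ with the volumes of metric balls. Since $|f-b|<1$, we have $b\le\rho$, and $b/\rho\to1$ uniformly by \eqref{eq:b-asymp}. Hence for every $\epsilon>0$ there is a constant $C_\epsilon$ with
\[
 B_p\bigl((1-\epsilon)t-C_\epsilon\bigr)\subset\Omega_t\subset B_p\bigl((1+\epsilon)t+C_\epsilon\bigr)
\]
for large $t$. Bishop--Gromov then gives $V(t)/t^3\to(4\pi/3)V_M$. The coarea formula with $|\nabla f|=q\in[\tfrac12,2]$ gives $V'(t)=\int_{\Sigma_t}q^{-1}\dd A$. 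By Lemma~\ref{lem:gradients}, $q\to1$ uniformly, so $V'(t)=(1+o(1))A(t)$. The remaining task is to pass from the integrated statement $V(t)\sim\tfrac{4\pi}{3}V_Mt^3$ to the pointwise statement for $A(t)=\Area\Sigma_t$. For this we need a one-sided monotonicity of $A$, which is where convexity of the levels enters.

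\textbf{Almost monotonicity.} By \eqref{eq:first-variation}, $(\log A)'\ge-8t^{-3}$. Integrating gives $A(s)\ge e^{-4/t^2}A(t)\ge(1-4t^{-2})A(t)$ for $s\ge t\ge a$. So $A$ is almost nondecreasing.

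\textbf{Liminf bound.} Almost monotonicity from below gives, for $\lambda<1$,
\[
 V(t)-V(\lambda t)=\int_{\lambda t}^t(1+o(1))A(s)\dd s\le(1+o(1))(1-\lambda)t\,A(t)\,(1+O(t^{-2})).
\]
Divide by $t^3$ and let $t\to\infty$. This yields
\[
 \liminf_{t\to\infty}\frac{A(t)}{t^2}\ge\frac{4\pi}{3}V_M\frac{1-\lambda^3}{1-\lambda},
\]
and letting $\lambda\to1$ gives $\liminf_{t\to\infty} A(t)/t^2\ge4\pi V_M$.

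\textbf{Limsup bound.} The same argument on $[t,\lambda t]$ with $\lambda>1$ gives
\[
 (1+o(1))(\lambda-1)t\,A(t)(1-O(t^{-2}))\le V(\lambda t)-V(t),
\]
hence
\[
 \limsup_{t\to\infty}\frac{A(t)}{t^2}\le\frac{4\pi}{3}V_M\frac{\lambda^3-1}{\lambda-1},
\]
and letting $\lambda\to1$ gives $\limsup_{t\to\infty} A(t)/t^2\le4\pi V_M$.

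When $V_M=0$, these steps still give $A(t)=o(t^2)$, since both bounds are zero. The uniform bound $A(t)\le Ct^2$ for all $t\ge a$ follows from the limit together with continuity of $A$ on compact intervals.

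\textbf{Main obstacle.} The delicate step is the sandwich of $\Omega_t$ between balls. It needs the uniform convergence $b/\rho\to1$ from \eqref{eq:b-asymp} and the properness of $f$; it is not a pointwise statement. One also has to check that the $o(1)$ in $V'=(1+o(1))A$ is uniform in $t$. This holds because $\Sigma_s$ lies in $\{\rho\ge s-1\}$, where $q\to1$ uniformly.
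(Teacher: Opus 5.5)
Your proof is correct and follows essentially the same route as the paper. Both sandwich the sublevel sets between geodesic balls, use the coarea formula with $q\to1$ uniformly to relate volume growth to the integral of $A$, and turn the integrated asymptotics into pointwise ones via the almost-monotonicity of $A$ from \eqref{eq:first-variation}, averaging over $[\lambda t,t]$ and $[t,\lambda t]$. The only difference is cosmetic: you differentiate the volume, $V'(t)=\int_{\Sigma_t}q^{-1}\dd A$, whereas the paper integrates the area, $\int_a^tA(s)\dd s=\int_{\Om a t}q\dd V$.
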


\begin{proof}
Write $V_f(t)=\Vol\{f<t\}$. Given $0<\delta<1$, outside a compact
set one has $(1-\delta)\rho\le f\le(1+\delta)\rho$.
For sufficiently large $t$, boundedness of $f$ on that compact set
therefore gives
\[
 B_p\bigl(t/(1+\delta)\bigr)\subset\{f<t\}
 \subset B_p\bigl(t/(1-\delta)\bigr).
\]
Divide volumes by $t^3$, pass to lower and upper limits, and let
$\delta\downarrow0$. This proves
\[
 \lim_{t\to\infty}\frac{V_f(t)}{t^3}=\frac{4\pi}{3}{V_M}.
\]
Set $\Om a t=\{a<f<t\}$. The coarea formula gives
\begin{equation}\label{eq:integrated-area}
 F_0(t):=\int_a^t A(s)\dd s=\int_{\Om a t}q\dd V,
 \qquad
 \lim_{t\to\infty}\frac{F_0(t)}{t^3}=\frac{4\pi}{3}{V_M}.
\end{equation}
Indeed, for a fixed large $L$, the difference between the volume
integral and $\Vol(\Om a t)$ has absolute value at most
\[
 C_L+\sup_{\{f\ge L\}}|q-1|\,V_f(t).
\]
Divide by $t^3$, let $t\to\infty$, and then let $L\to\infty$.

By \eqref{eq:first-variation}, $A(t)\exp(\int_a^t8s^{-3}\dd s)$
is nondecreasing. Fix $0<\epsilon<1$ and define
\[
 E_+(t)=\exp\left(\int_t^{(1+\epsilon)t}8s^{-3}\dd s\right),
 \qquad
 E_-(t)=\exp\left(\int_{(1-\epsilon)t}^t8s^{-3}\dd s\right).
\]
Both tend to $1$. For sufficiently large $t$, monotonicity gives
\begin{align*}
 A(t)&\le E_+(t)\frac{F_0((1+\epsilon)t)-F_0(t)}{\epsilon t},\\
 A(t)&\ge E_-(t)^{-1}\frac{F_0(t)-F_0((1-\epsilon)t)}{\epsilon t}.
\end{align*}
Consequently,
\begin{align*}
 \limsup_{t\to\infty}\frac{A(t)}{t^2}
 &\le\frac{4\pi{V_M}}{3}\frac{(1+\epsilon)^3-1}{\epsilon},\\
 \liminf_{t\to\infty}\frac{A(t)}{t^2}
 &\ge\frac{4\pi{V_M}}{3}\frac{1-(1-\epsilon)^3}{\epsilon}.
\end{align*}
Letting $\epsilon\downarrow0$ proves the assertion. If ${V_M}=0$,
the upper bound and $A(t)\ge0$ suffice.
\end{proof}

\section{Integrability of the negative curvature errors}\label{sec:negative}

For a real-valued function $h$, write $h_+=\max\{h,0\}$ and
$h_-=\max\{-h,0\}$.

\begin{lemma}\label{lem:negative}
On the exterior foliated by the $\Sigma_t$,
\begin{equation}\label{eq:negative-integrable}
 \int_{\{f>a\}}\bigl(D_-+(K_\Sigma)_-\bigr)\dd V<\infty.
\end{equation}
The corresponding integrals with respect to $\dd t\dd A$ are also finite.
Moreover, for some constant $C$ and all $t\ge a$,
$\int_{\Sigma_t}H_-\dd A\le C/t$.
\end{lemma}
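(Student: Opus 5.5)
The plan is to use only the principal-curvature lower bound \eqref{eq:principal-lower}, the area bound $A(t)\le Ct^2$ from Lemma~\ref{lem:area}, and the coarea formula on $\{f>a\}$. The coarea formula gives $\dd V=q^{-1}\dd A\dd t$ with $\tfrac12\le q\le2$ on $\{f\ge a\}$. Hence the volume integrals and the $\dd t\dd A$ integrals of nonnegative functions are comparable up to a factor $2$, and it suffices to bound the latter.

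\emph{Bound on $D_-$.} The product $D=\kappa_1\kappa_2$ is negative only when one principal curvature is negative. Since each $\kappa_i\ge-2t^{-3}$, at such a point
\[
 D_-\le 2t^{-3}\max(\kappa_1,\kappa_2)_+\le 2t^{-3}\bigl(H+4t^{-3}\bigr).
\]
The last inequality uses that the positive curvature is at most $H$ plus the absolute value of the negative one. Integrating over $\Sigma_t$ gives
\[
 \int_{\Sigma_t}D_-\dd A\le 2t^{-3}\int_{\Sigma_t}H_+\dd A+8t^{-6}A(t).
\]
The second term is $O(t^{-4})$, which is integrable in $t$. For the first term I need $\int_{\Sigma_t}H_+\dd A$ to be integrable against $t^{-3}\dd t$. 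This follows from \eqref{eq:first-variation}:
\[
 \int_{\Sigma_t}H_+\dd A\le 2\int_{\Sigma_t}\frac{H_+}{q}\dd A
 =2A'(t)+2\int_{\Sigma_t}\frac{H_-}{q}\dd A
 \le 2A'(t)+16t^{-3}A(t).
\]
Integrating $t^{-3}A'(t)$ by parts against $A(t)\le Ct^2$ leaves $\int t^{-4}A\dd t<\infty$ plus a bounded boundary term. So $\int_a^\infty t^{-3}\int_{\Sigma_t}H_+\dd A\dd t<\infty$.

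\emph{Bound on $(K_\Sigma)_-$.} By \eqref{eq:gauss-leaf}, $K_\Sigma=K_T+D$ with $K_T\ge0$ because $K\ge0$. Hence $(K_\Sigma)_-\le D_-$ pointwise, and this case reduces to the previous one.

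\emph{Bound on $H_-$.} From \eqref{eq:principal-lower} we have $H_-\le4t^{-3}$, so $\int_{\Sigma_t}H_-\dd A\le4t^{-3}A(t)\le C/t$ by Lemma~\ref{lem:area}.

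The main obstacle is controlling $\int H_+$ in the $D_-$ estimate. No pointwise upper bound on $H$ is available, so the first variation identity and the area growth must be used in integrated form, and the boundary term from integration by parts must be checked to stay bounded. Everything else is pointwise algebra.
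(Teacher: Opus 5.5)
Your proof is correct and follows the same structure as the paper's. The principal-curvature lower bound gives $(K_\Sigma)_-\le D_-\le 2t^{-3}H_++Ct^{-6}$. The weighted integral of $t^{-3}H_+$ is then controlled by one integration by parts against $A(t)\le Ct^2$, and $H_-$ follows from the same pointwise bound. The only difference is in that integration by parts. You use the first-variation formula $A'(t)=\int_{\Sigma_t}H/q\dd A$ and integrate $t^{-3}A'(t)$ in the level parameter. The paper instead bounds $H_+\le2(\Delta f+3f^{-3})$ pointwise, using $\Hess f(\nu,\nu)\ge-f^{-3}$, and integrates $f^{-3}\Delta f$ over slabs by the divergence theorem. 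These are two forms of the same computation, and yours avoids the ambient Laplacian.
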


\begin{proof}
Put $z=f^{-3}$. On the exterior, $\Hess f\ge-zg$, so
$\Delta f+3z\ge0$. Taking the trace in an adapted orthonormal frame $e_1,e_2,\nu$ gives
\[
 \Delta f=qH+\Hess f(\nu,\nu).
\]
Thus $qH\le\Delta f+z$. Since $\Delta f+3z\ge0$,
$qH_+\le(\Delta f+z)_+\le\Delta f+3z$, and hence
\[
 H_+\le2(\Delta f+3z).
\]
By \eqref{eq:principal-lower}, both $\kappa_i+2z$ are nonnegative.
Expanding their product gives
\[
 D_-\le2zH_++4z^2\le4z(\Delta f+3z)+4z^2.
\]
Since $K_T\ge0$, \eqref{eq:gauss-leaf} implies
$(K_\Sigma)_-\le D_-$.

Let $F(t)=\int_{\Sigma_t}q\dd A$. Lemma~\ref{lem:area} implies
$F(t)\le Ct^2$ for some constant $C$. Integration by parts gives
\begin{equation}\label{eq:weighted-laplacian}
 \int_{\Om a T}f^{-3}\Delta f\dd V
 =T^{-3}F(T)-a^{-3}F(a)
   +3\int_{\Om a T}f^{-4}q^2\dd V.
\end{equation}
By the coarea formula and $F(t)\le Ct^2$,
\[
 \int_{\{f>a\}}f^{-4}q^2\dd V
 =\int_a^\infty t^{-4}F(t)\dd t
 \le C\int_a^\infty t^{-2}\dd t<\infty.
\]
Also, $q\ge1/2$ and $A(t)\le Ct^2$ imply
\[
 \int_{\{f>a\}}f^{-6}\dd V
 \le 2C\int_a^\infty t^{-4}\dd t<\infty.
\]
The boundary term satisfies $T^{-3}F(T)\le C/T$. Thus the integrals
of the nonnegative function $z(\Delta f+3z)$ over $\Om a T$ are
uniformly bounded. Monotone convergence and the estimate for $D_-$
prove \eqref{eq:negative-integrable}. The coarea formula gives
$\dd t\dd A=q\dd V$, so the corresponding negative parts in this
measure are integrable as well. Finally,
\[
 \int_{\Sigma_t}H_-\dd A\le4t^{-3}A(t)\le C/t.
\]
\end{proof}

\paragraph{Changing the coarea measure.}
We will use the following elementary observation twice. Write
$\dd\mu=\dd t\dd A=q\dd V$. If a function $h$ on the exterior
satisfies $\int_{\Om a T}|h|\dd\mu\le CT$ for all large $T$, then
\begin{equation}\label{eq:measure-change}
 \lim_{T\to\infty}\frac1T
 \left|\int_{\Om a T}h\dd V-\int_{\Om a T}h\dd\mu\right|=0.
\end{equation}
In fact, for any fixed $L>a$ and $T\ge L$, the expression before
division by $T$ is bounded by
$C_L+CT\sup_{\{f\ge L\}}|q^{-1}-1|$.
First let $T\to\infty$, and then $L\to\infty$.

\begin{lemma}[Integrated intrinsic curvature]\label{lem:intrinsic}
Set $G(T)=\int_{\Om a T}K_\Sigma\dd V$. Then
\begin{equation}\label{eq:intrinsic}
 \lim_{T\to\infty}\frac{G(T)}T=2\pi\chi_\infty,
 \qquad \int_{\Om a T}|K_\Sigma|\dd V\le CT
\end{equation}
for some constant $C$ and all sufficiently large $T$.
\end{lemma}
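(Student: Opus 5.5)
The plan is to compute the integral first in the coarea measure $\dd\mu=\dd t\dd A=q\dd V$. There Gauss--Bonnet on each leaf gives the value exactly. Afterwards we transfer to $\dd V$ by the observation \eqref{eq:measure-change}.

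\textbf{The $\dd\mu$ integral.} By the coarea formula and \eqref{eq:gauss-leaf},
\[
 \int_{\Om a T}K_\Sigma\dd\mu=\int_a^T\int_{\Sigma_t}K_\Sigma\dd A\dd t
 =2\pi\chi_\infty\,(T-a).
\]
This uses Lemma~\ref{lem:end-euler}, which makes $\chi(\Sigma_t)=\chi_\infty$ constant for $t\ge a$. Fubini is justified because $K_\Sigma$ is smooth on the closed slab $\{a\le f\le T\}$.

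\textbf{The linear $L^1$ bound.} Write $|K_\Sigma|=K_\Sigma+2(K_\Sigma)_-$. Then
\[
 \int_{\Om a T}|K_\Sigma|\dd\mu
 =2\pi\chi_\infty(T-a)+2\int_{\Om a T}(K_\Sigma)_-\dd\mu .
\]
By Lemma~\ref{lem:negative}, the last integral is bounded uniformly in $T$, because the negative part is integrable in the measure $\dd t\dd A$. Hence $\int_{\Om a T}|K_\Sigma|\dd\mu\le CT$. Since $q\ge1/2$ on the exterior, $\dd V\le2\dd\mu$. This gives the second assertion of \eqref{eq:intrinsic}, $\int_{\Om a T}|K_\Sigma|\dd V\le CT$, after enlarging $C$.

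\textbf{Transfer to $\dd V$.} Now apply \eqref{eq:measure-change} with $h=K_\Sigma$; its hypothesis is exactly the bound just proved. This shows that $G(T)/T$ and $T^{-1}\int_{\Om a T}K_\Sigma\dd\mu=2\pi\chi_\infty(T-a)/T$ have the same limit, namely $2\pi\chi_\infty$.

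\textbf{Main obstacle.} The only delicate point is the absolute integrability. Without it, the change of measure could fail, since $q^{-1}-1\to0$ only uniformly and not at an integrable rate. That integrability is exactly what the signed decomposition, combined with the finiteness of $\int(K_\Sigma)_-$ from Lemma~\ref{lem:negative}, supplies.
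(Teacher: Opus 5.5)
Your proposal is correct and follows the paper's proof step for step. The steps are: the exact Gauss--Bonnet identity $\int_{\Om a T}K_\Sigma\dd\mu=2\pi\chi_\infty(T-a)$; the decomposition $|K_\Sigma|=K_\Sigma+2(K_\Sigma)_-$ with Lemma~\ref{lem:negative} to get the linear $L^1$ bound; $q^{-1}\le2$ to pass to $\dd V$; and \eqref{eq:measure-change} for the signed limit. The only difference is cosmetic: the paper bounds the first term by $4\pi(T-a)$ using $\chi_\infty\in\{0,2\}$, whereas your argument only needs $\chi_\infty$ to be a fixed constant independent of $T$.
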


\begin{proof}
Since the Euler characteristic of the levels is constant,
Gauss--Bonnet gives the exact identity
\[
 \int_{\Om a T}K_\Sigma\dd\mu=2\pi\chi_\infty(T-a).
\]
By Lemma~\ref{lem:negative}, there is a constant $C_0$ such that
the integral of $(K_\Sigma)_-$ with respect to $\dd\mu$ is at most
$C_0$ on every slab. By Lemma~\ref{lem:end-euler},
$\chi_\infty\in\{0,2\}$.

The identity $|h|=h+2h_-$ now gives
\[
 \int_{\Om a T}|K_\Sigma|\dd\mu
 \le4\pi(T-a)+2C_0\le CT
\]
for all sufficiently large $T$. Since $q^{-1}\le2$, the integral
with respect to $\dd V$ is at most $2CT$. Apply
\eqref{eq:measure-change} to the signed integral and divide the
Gauss--Bonnet identity by $T$ to obtain the limit in
\eqref{eq:intrinsic}.
\end{proof}

\section{A divergence identity and boundary averages}\label{sec:identities}

The following lemma is used in \cite{Petrunin}. We give a proof here for completeness.

\begin{lemma}[A divergence identity]\label{lem:divergence}
Set $P(T)=\int_{\Om a T}D\dd V$ and
$J(T)=\int_{\Om a T}\Ric(\nu,\nu)\dd V$. Then
\begin{equation}\label{eq:ricci-identity}
 J(T)=2P(T)+Q(a)-Q(T),
\end{equation}
where $Q(t)=\int_{\Sigma_t}H\dd A$.
Consequently, with $I_f(T)=\int_{\{f<T\}}S\dd V$,
\begin{equation}\label{eq:scalar-identity}
 I_f(T)=C_a+2G(T)+2P(T)-2Q(T),
 \qquad C_a=\int_{\{f<a\}}S\dd V+2Q(a).
\end{equation}
Equivalently,
\begin{equation}\label{eq:scalar-reduced}
 I_f(T)=I_f(a)+2G(T)-2P(T)+2J(T).
\end{equation}
\end{lemma}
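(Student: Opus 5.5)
The identity \eqref{eq:ricci-identity} will come from the classical Bochner-type formula for the unit normal field $\nu=\nabla f/q$ on the exterior $\{f>a\}$, where $q\ge1/2$ and $\nu$ is smooth. The plan is to compute the divergence of
\[
 X=\nabla_\nu\nu-(\diver\nu)\nu .
\]
The standard identity for any smooth vector field $Y$ is
\[
 \diver(\nabla_YY)-Y(\diver Y)=\Ric(Y,Y)+\tr\bigl((\nabla Y)^2\bigr),
\]
where $\nabla Y$ is viewed as the endomorphism $Z\mapsto\nabla_ZY$. It follows by tracing the commutator $\nabla^2_{e_i,Y}Y-\nabla^2_{Y,e_i}Y=R(e_i,Y)Y$. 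Applying it with $Y=\nu$ gives
\[
 \diver X=\diver(\nabla_\nu\nu)-\nu(\diver\nu)-(\diver\nu)^2 .
\]
Now decompose $\nabla\nu$ in an adapted frame $e_1,e_2,\nu$. Since $|\nu|=1$, the image of $\nabla\nu$ is tangent to the level sets. On the tangent directions it restricts to the shape operator with eigenvalues $\kappa_1,\kappa_2$. In the normal direction it gives $\nabla_\nu\nu$, which is tangent.

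With this decomposition, $\tr((\nabla\nu)^2)=\kappa_1^2+\kappa_2^2=H^2-2D$. This holds because the matrix is block upper triangular with diagonal blocks $\Pi$ and $0$, and the off-diagonal entries (the acceleration $\nabla_\nu\nu$) do not contribute to the trace of the square. Also $\diver\nu=H$. Combining these yields the pointwise identity
\[
 \diver\bigl(\nabla_\nu\nu-H\nu\bigr)=\Ric(\nu,\nu)-2D .
\]

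Next, integrate this identity over $\Om aT$. The boundary consists of $\Sigma_T$ with outward normal $\nu$ and $\Sigma_a$ with outward normal $-\nu$. Since $\nabla_\nu\nu\perp\nu$, only $-H\nu$ contributes to the flux. The divergence theorem therefore gives $J(T)-2P(T)=-Q(T)+Q(a)$, which is \eqref{eq:ricci-identity}. All integrands are smooth on the compact closure, so no integrability issue arises at finite $T$.

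For \eqref{eq:scalar-identity}, use the traced Gauss equation on each leaf:
\[
 S=2K_T+2\Ric(\nu,\nu),\qquad K_T=K_\Sigma-D ,
\]
which follows from $S=2\sum_{i<j}K_{ij}$ in the frame. This gives $S=2K_\Sigma-2D+2\Ric(\nu,\nu)$. Integrating over $\Om aT$ and adding $\int_{\{f<a\}}S\dd V$ produces \eqref{eq:scalar-reduced} directly; note that $\{f=a\}$ has measure zero. Substituting $J=2P+Q(a)-Q(T)$ then converts this into \eqref{eq:scalar-identity} with $C_a$ as stated, since $I_f(a)=\int_{\{f<a\}}S\dd V$.

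The only delicate point is the trace computation. One must check that $\tr((\nabla\nu)^2)$ picks up no cross term from $\nabla_\nu\nu$, which the block-triangular structure settles, and that the sign conventions for $\Pi$ and the flux orientation match. The rest is routine.
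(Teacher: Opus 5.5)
Your proof is correct and follows essentially the same route as the paper. You take the divergence of $\nabla_\nu\nu-H\nu$, which is the negative of the paper's field $H\nu-Z$, and use the Ricci commutation identity together with the block-triangular trace $\tr\bigl((\nabla\nu)^2\bigr)=|\Pi|^2=H^2-2D$. You then apply the divergence theorem on $\Om{a}{T}$ and finish with the traced Gauss equation $S=2K_\Sigma-2D+2\Ric(\nu,\nu)$. The only difference is cosmetic: you quote $\diver(\nabla_YY)-Y(\diver Y)=\Ric(Y,Y)+\tr\bigl((\nabla Y)^2\bigr)$ as standard, whereas the paper derives it in index notation.
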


\begin{proof}
Put $Z=\nabla_\nu\nu$. Since $|\nu|=1$, every $\nabla_X\nu$ is
orthogonal to $\nu$; in particular, $Z$ is tangent to the leaves.
In an adapted orthonormal frame $e_1,e_2,\nu$,
\[
 \diver\nu=\sum_{i=1}^2\langle\nabla_{e_i}\nu,e_i\rangle=H.
\]
At a point, compute in a normal orthonormal frame, using repeated
indices for summation. Differentiating $Z^i=\nu^j\nabla_j\nu^i$ gives
\[
 \diver Z=(\nabla_i\nu^j)(\nabla_j\nu^i)
             +\nu^j\nabla_i\nabla_j\nu^i.
\]
Commuting the two covariant derivatives and contracting the output
index with the first derivative index gives
\[
 \nabla_i\nabla_j\nu^i
 =\nabla_j\nabla_i\nu^i+\Ric_{kj}\nu^k.
\]
To evaluate the quadratic term, consider the endomorphism
$L(X)=\nabla_X\nu$. In the adapted orthonormal frame it has block
matrix
\[
 L=\begin{pmatrix}\Pi^\sharp&Z\\0&0\end{pmatrix},
\]
where $\Pi^\sharp$ denotes the shape operator. The last row is zero
because $\langle\nabla_X\nu,\nu\rangle=0$. Hence
\[
 (\nabla_i\nu^j)(\nabla_j\nu^i)
 =\tr(L^2)=\tr\bigl((\Pi^\sharp)^2\bigr)=|\Pi|^2.
\]
In particular, the contraction is the trace of the square of $L$,
not its squared norm, so there is no $|Z|^2$ term. It follows that
\[
 \diver Z=|\Pi|^2+\nu(H)+\Ric(\nu,\nu).
\]

On the other hand, $\diver(H\nu)=\nu(H)+H^2$. Subtraction gives
\[
 \diver(H\nu-Z)=H^2-|\Pi|^2-\Ric(\nu,\nu)
 =2D-\Ric(\nu,\nu).
\]

The outward unit normals of $\Om a T$ are $\nu$ on $\Sigma_T$ and
$-\nu$ on $\Sigma_a$. Since $Z$ is tangential, the corresponding
fluxes of $H\nu-Z$ are $H$ and $-H$. The divergence theorem gives
$Q(T)-Q(a)=2P(T)-J(T)$, which proves \eqref{eq:ricci-identity}.

The three-dimensional Gauss equation reads
\[
 S=2K_\Sigma+2\Ric(\nu,\nu)-2D.
\]
Integrating and substituting \eqref{eq:ricci-identity} gives
\eqref{eq:scalar-identity}. Alternatively, integrating this Gauss
equation directly gives \eqref{eq:scalar-reduced}.
\end{proof}

We also record an estimate that uses only the area variation and
the negative mean-curvature bound. For sufficiently large $R$, set
\[
 \underline q_R=\min\bigl\{1,\inf_{\{f\ge R\}}q\bigr\}.
\]
Then $\underline q_R\to1$ and $0\le1-\underline q_R/q\le1$ on $\{f\ge R\}$.
By \eqref{eq:first-variation} and Lemma~\ref{lem:negative},
for $t\ge R$ we have
\[
 Q(t)-\underline q_RA'(t)
 =\int_{\Sigma_t}H(1-\underline q_R/q)\dd A
 \ge-\int_{\Sigma_t}H_-\dd A\ge-C/t.
\]
Integration over $[R,T]$ therefore gives
\begin{equation}\label{eq:boundary-average}
 \int_R^T Q(t)\dd t
 \ge \underline q_R\bigl(A(T)-A(R)\bigr)-C\log(T/R)
 \qquad(T>R).
\end{equation}
In particular, for every fixed $\lambda>1$, Lemma~\ref{lem:area}
implies
\begin{equation}\label{eq:boundary-average-limit}
 \liminf_{R\to\infty}\frac1{R^2}\int_R^{\lambda R}Q(t)\dd t
 \ge4\pi V_M(\lambda^2-1).
\end{equation}
These estimates will give both the lower bound for $P(T)/T$ and
the vanishing limit of $J(T)/T$.

\section{Curvature limits and the one-ended formula}\label{sec:limits}

\begin{proposition}[Limit of the integrated extrinsic curvature]\label{prop:extrinsic}
Set $P(T)=\int_{\Om a T}D\dd V$. Then
\begin{equation}\label{eq:extrinsic}
 \lim_{T\to\infty}\frac{P(T)}T=4\pi V_M,
 \qquad \int_{\Om a T}|D|\dd V\le CT
\end{equation}
for some constant $C$ and all sufficiently large $T$.
\end{proposition}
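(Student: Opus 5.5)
The plan is to squeeze $P(T)/T$ between two bounds. The lower bound will come from the divergence identity \eqref{eq:ricci-identity} together with the boundary average \eqref{eq:boundary-average-limit}. The upper bound will come from a determinant comparison between $f$ and the distance approximation $u$, made rigorous by an integrated Bochner (null-Lagrangian) identity. The bound $\int_{\Om a T}|D|\dd V\le CT$ then follows from the upper bound on $P$ and Lemma~\ref{lem:negative}, using $|D|=D+2D_-$ and $q^{-1}\le 2$.

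\emph{Lower bound.} By \eqref{eq:ricci-identity}, $2P(T)=J(T)+Q(T)-Q(a)$, and $J(T)\ge0$ because $\Ric\ge0$. Hence $2P(T)\ge Q(T)-Q(a)$. This cannot be used pointwise in $T$, because $Q(T)$ is only controlled on average. So I will average over $T\in[R,\lambda R]$. By Lemma~\ref{lem:negative}, $D_-$ is integrable, so $P$ is almost nondecreasing: $P(T)\le P(\lambda R)+C$ for all $T\le\lambda R$. Averaging $2P(T)\ge Q(T)-Q(a)$ over $[R,\lambda R]$ and applying \eqref{eq:boundary-average-limit} gives
\[
2P(\lambda R)+2C\ge\frac1{(\lambda-1)R}\int_R^{\lambda R}Q\dd t-Q(a)\ge 4\pi V_M(\lambda+1)R-o(R).
\]
Dividing by $\lambda R$ gives $\liminf P(T)/T\ge2\pi V_M(1+\lambda^{-1})$, and letting $\lambda\downarrow1$ gives the bound $4\pi V_M$.

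\emph{Upper bound.} First I will pass to the measure $\dd\mu$, so that $P$ is compared with $\int_a^T\!\int_{\Sigma_t}D\dd A\dd t$. On $\Sigma_t$, $D$ is the tangential $2\times2$ determinant of $q^{-1}\Hess f$. Write $f=u+w$. The aim is to show that the integral of $\sigma_2$ of the tangential block of $\Hess f$ is, up to $o(T)$, at most the integral of the corresponding quantity for $\Hess u$. That quantity is pointwise $\le(\rho^{-1}+\rho^{-2})^2$ wherever the tangential block of $\Hess u$ is nonnegative. Where it is not, $\Hess u$ is pinched between $-C/\rho$ (coming from $\Hess f\ge-(1+\rho)^{-3}$ and $\Hess w\ge -C/\rho$ at the relevant scale) and $\rho^{-1}$, so the determinant is again $O(\rho^{-2})$.

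The difference $\sigma_2(\Hess f)-\sigma_2(\Hess u)$ is a mixed term plus $\sigma_2(\Hess w)$. By the integrated Bochner formula, the second elementary symmetric function of a Hessian is a divergence plus a Ricci term, $\sigma_2(\Hess h)=\tfrac12\Ric(\nabla h,\nabla h)+\diver(\cdots)$. This is the same computation as in Lemma~\ref{lem:divergence}. Polarizing it, every term in the difference becomes a divergence of expressions that are bilinear in $\nabla w$ and a Hessian, plus Ricci terms in $\nabla w$. The resulting boundary fluxes on $\Sigma_T$ are bounded by $\sup_{\Sigma_T}|\nabla w|$ times $\int_{\Sigma_T}|\Hess|$, which is $o(1)\cdot O(T)$ by Lemma~\ref{lem:gradients} and the area bound. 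The Ricci terms carry the factor $|\nabla w|^2\to0$ against a linearly growing Ricci integral; $\int\Ric\le\int S$ grows linearly by Petrunin's bound. The restriction to tangential blocks introduces mixed terms $\Hess f(\nu,e_i)$, which should be absorbed the same way the $Z$ terms are in Lemma~\ref{lem:divergence}. What remains is
\[
\int_{\Om a T}D\dd\mu\le\int_a^T\!\int_{\Sigma_t}(1+o(1))\rho^{-2}\dd A\dd t+o(T)=\int_a^T\frac{A(t)}{t^2}(1+o(1))\dd t+o(T).
\]
By Lemma~\ref{lem:area} this is $4\pi V_MT+o(T)$, and \eqref{eq:measure-change} transfers the bound back to $\dd V$.

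\emph{Main obstacle.} The hard step is making the determinant comparison rigorous. Specifically, one must control the fluxes and the terms involving $\Hess w$ using only the one-sided Hessian bounds of Lemma~\ref{lem:smoothing} and $|\nabla w|\to0$, with no two-sided Hessian bound on $f$. My first attempt will use $\Hess f\ge -z$ together with $H_+\le2(\Delta f+3z)$ from Lemma~\ref{lem:negative}. This reduces $\int|\Hess f|$ on a slab to $\int\Delta f$, which is a flux $\int_{\Sigma_T} q=O(T^2)$ and hence gives the needed linear-in-$T$ control after integration in $t$ with weight $t^{-1}$.
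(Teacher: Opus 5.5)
\textbf{Assessment.} Your lower bound is correct and is the paper's own argument. It combines almost-monotonicity of $P$ from $\int D_-<\infty$, the inequality $2P\ge Q-Q(a)$ from \eqref{eq:ricci-identity} and $\Ric\ge0$, averaging over $[R,\lambda R]$ with \eqref{eq:boundary-average}, and then $\lambda\downarrow1$. Deducing $\int|D|\le CT$ from an upper bound on $P$ is also fine. The gap is in the upper bound, which is the heart of the proposition.

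\textbf{The pointwise step fails.} You claim that where the tangential block of $\Hess u$ is not nonnegative, $\Hess u$ is pinched between $-C/\rho$ and $\rho^{-1}$. No such lower bound exists. From $\Hess w\ge-C/\rho$ you only get $\Hess u=\Hess f-\Hess w\le\Hess f+C/\rho$. That is an upper bound in terms of the uncontrolled $\Hess f$, not a lower bound. Lemma~\ref{lem:smoothing} gives no lower bound, and none can be expected, since $u$ smooths $\rho$, which has concave kinks across the cut locus. Where both tangential eigenvalues of $\Hess u$ are very negative, the tangential determinant is large and positive. So $\det(\Hess u|_{T\Sigma})\lesssim\rho^{-2}$ can hold only after integration, through a null Lagrangian.

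\textbf{The ambient Bochner formula does not supply that null Lagrangian.} It concerns $\sigma_2$ of the full $3\times3$ Hessian, and
\[
\sigma_2(\Hess f)=q^2D+qH\Hess f(\nu,\nu)-\textstyle\sum_i\Hess f(\nu,e_i)^2 .
\]
The last two terms are not controlled, and saying they are ``absorbed like the $Z$ terms'' is not an argument. Applied to $f$ or to $u$, the identity yields $\tfrac12\int\Ric(\nabla h,\nabla h)\approx J/2$ plus a flux through $\Sigma_T$ that is essentially $Q(T)$. That only reproduces \eqref{eq:ricci-identity}. A sharp upper bound on $P$ from it would need $J(T)=o(T)$, which the paper proves only afterwards, in Lemma~\ref{lem:boundary-limits}, using this proposition. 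Petrunin's bound gives only $J=O(T)$, not the constant.

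\textbf{The missing idea is to work intrinsically on each closed leaf.} Put $v=u|_{\Sigma_t}$ and $\alpha=\langle\nabla u,\nu\rangle$. The Gauss formula gives $\Hess_{\Sigma_t}v=(\Hess u)|_{T\Sigma_t}-\alpha\Pi$. Only the upper bound \eqref{eq:u-smooth} and $\Pi\ge-e_tg$, with $e_t=2t^{-3}$, are then needed to obtain the sandwich
\[
0\le\alpha(\Pi+e_tg)\le\lambda_tg-\Hess_{\Sigma_t}v,\qquad t\lambda_t\to1 .
\]
Determinant monotonicity for positive semidefinite $2\times2$ matrices gives $D\le\alpha^{-2}\det(\lambda_tg-\Hess_{\Sigma_t}v)+e_t^2$. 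Now integrate over the closed leaf. The linear term $\int\Delta_{\Sigma_t}v$ vanishes. The two-dimensional Bochner identity gives
\[
\int\det\Hess_{\Sigma_t}v=\tfrac12\int K_\Sigma|\nabla_{\Sigma_t}v|^2\le\tfrac12\eta_t^2\bigl(4\pi+N(t)\bigr).
\]
Here $\eta_t=\sup|\nabla_{\Sigma_t}v|\to0$ by Lemma~\ref{lem:gradients}, and $N(t)=\int_{\Sigma_t}(K_\Sigma)_-$ is integrable in $t$ by Lemma~\ref{lem:negative}. The very negative directions of $\Hess u$ are thus absorbed by a leafwise null Lagrangian whose curvature term is controlled by Gauss--Bonnet, not by the ambient Ricci curvature. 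This yields $\int_{\Sigma_t}D\dd A\le(1+o(1))\lambda_t^2A(t)+o(1)+CN(t)$ on every leaf, with no fluxes. Lemma~\ref{lem:area} then gives $\limsup P(T)/T\le4\pi V_M$.
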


\begin{proof}
We first prove the upper estimate and the absolute integral bound.
On $\Sigma_t$ set $v=u|_{\Sigma_t}$ and
$\alpha=\langle\nabla u,\nu\rangle$. Lemma~\ref{lem:gradients} gives
\begin{equation}\label{eq:alpha}
 \alpha\longrightarrow1,\qquad
 \eta_t:=\sup_{\Sigma_t}|\nabla_{\Sigma_t}v|\longrightarrow0,
\end{equation}
uniformly. Enlarge $a$ if necessary so that $\alpha>0$ there.
The Gauss formula, with our sign convention, is
$\nabla_X^M Y=\nabla_X^{\Sigma_t}Y-\Pi(X,Y)\nu$.
Substituting it in the definition of the Hessian gives
\begin{equation}\label{eq:restriction}
 \Hess_{\Sigma_t}v=(\Hess u)|_{T\Sigma_t}-\alpha\Pi.
\end{equation}
Let $e_t=2t^{-3}$. Then $\Pi+e_tg_{\Sigma_t}$ is positive
semidefinite. By \eqref{eq:u-smooth}, \eqref{eq:alpha}, and
$\rho/t\to1$ on $\Sigma_t$, there is a scalar
\[
 \lambda_t>0,\qquad \lim_{t\to\infty}t\lambda_t=1
\]
constant on each leaf, such that the following inequalities hold as
quadratic forms:
\begin{equation}\label{eq:matrix}
 0\le\alpha(\Pi+e_tg_{\Sigma_t})
 \le\lambda_tg_{\Sigma_t}-\Hess_{\Sigma_t}v.
\end{equation}
For example, take the supremum on the leaf of
$\rho^{-1}+\rho^{-2}+\alpha e_t$ for $\lambda_t$.

We compute the following determinants in normal coordinates for
the induced metric on $\Sigma_t$ at each point. Monotonicity of
the determinant on positive semidefinite $2\times2$ matrices,
together with $H\ge-2e_t$, implies
\begin{align*}
 D&=\det(\Pi+e_tI)-e_tH-e_t^2\\
  &\le\alpha^{-2}\det(\lambda_tI-\Hess_{\Sigma_t}v)+e_t^2.
\end{align*}
The determinant on the right is nonnegative by \eqref{eq:matrix}.
Put $c_t=(\min_{\Sigma_t}\alpha)^{-2}$, so $c_t\to1$. Integrating,
expanding the two-dimensional determinant, and using
$\int_{\Sigma_t}\Delta_{\Sigma_t}v\dd A=0$, we obtain
\begin{equation}\label{eq:det-bound}
 \int_{\Sigma_t}D\dd A
 \le c_t\left(\lambda_t^2A(t)
       +\int_{\Sigma_t}\det\Hess_{\Sigma_t}v\dd A\right)
       +e_t^2A(t).
\end{equation}
The integrated Bochner identity on a closed surface gives
\begin{align}
 \int_{\Sigma_t}\det\Hess_{\Sigma_t}v\dd A
 &=\frac12\int_{\Sigma_t}
       \bigl((\Delta_{\Sigma_t}v)^2-|\Hess_{\Sigma_t}v|^2\bigr)\dd A
       \notag\\
 &=\frac12\int_{\Sigma_t}K_\Sigma|\nabla_{\Sigma_t}v|^2\dd A
       \notag\\
 &\le\frac{\eta_t^2}{2}
       \left(4\pi+N(t)\right),\qquad
 N(t)=\int_{\Sigma_t}(K_\Sigma)_-\dd A.\label{eq:surface-bochner}
\end{align}
The last inequality uses
$\int(K_\Sigma)_+\le4\pi+\int(K_\Sigma)_-$.

By Lemma~\ref{lem:area},
\[
 \lim_{t\to\infty}\lambda_t^2A(t)=4\pi{V_M},
 \qquad e_t^2A(t)\le Ct^{-4}.
\]
The function
\[
 B(t)=c_t\bigl(\lambda_t^2A(t)+2\pi\eta_t^2\bigr)+e_t^2A(t)
\]
therefore tends to $4\pi{V_M}$. Also $c_t\eta_t^2$ is bounded
and $\int_a^\infty N(t)\dd t<\infty$. Combining
\eqref{eq:det-bound} and \eqref{eq:surface-bochner}, then integrating,
gives
\[
 \int_{\Om a T}D\dd\mu
 \le\int_a^T B(t)\dd t+C\int_a^T N(t)\dd t.
\]
Dividing by $T$ and using the limit of the averages of $B$ yields
\[
 \limsup_{T\to\infty}T^{-1}\int_{\Om a T}D\dd\mu
 \le4\pi{V_M}.
\]
The negative part of $D$ has bounded integral by
Lemma~\ref{lem:negative}, so $\int_{\Om a T}|D|\dd\mu\le CT$
for all sufficiently large $T$. Equation~\eqref{eq:measure-change}
transfers the signed estimate to $\dd V$, and $q^{-1}\le2$ transfers
the absolute bound. In particular,
\begin{equation}\label{eq:extrinsic-upper}
 \limsup_{T\to\infty}\frac{P(T)}T\le4\pi V_M.
\end{equation}

To prove the reverse estimate, use Lemma~\ref{lem:negative} to set
\[
 C_-:=\int_{\{f>a\}}D_-\dd V<\infty.
\]
For $T\ge s\ge a$, one has
\begin{equation}\label{eq:p-almost-monotone}
 P(T)-P(s)=\int_{\{s<f<T\}}D\dd V\ge-C_-.
\end{equation}
Moreover, $\Ric\ge0$ and \eqref{eq:ricci-identity} give
\begin{equation}\label{eq:p-q-lower}
 2P(s)=J(s)+Q(s)-Q(a)\ge Q(s)-Q(a).
\end{equation}
Consequently, for every $s\in[R,T]$,
\[
 2P(T)\ge Q(s)-Q(a)-2C_-.
\]
Integrating this inequality over $[R,T]$ and applying
\eqref{eq:boundary-average} yields
\begin{equation}\label{eq:extrinsic-lower-average}
 \begin{split}
 2(T-R)P(T)
 &\ge \underline q_R\bigl(A(T)-A(R)\bigr)-C\log(T/R)\\
 &\quad-(T-R)\bigl(Q(a)+2C_-\bigr).
 \end{split}
\end{equation}
Fix $\lambda>1$ and put $T=\lambda R$. Divide by
$2(T-R)T$. Since $\underline q_R\to1$ and
$A(t)/t^2\to4\pi V_M$, the area term tends to
\[
 \frac{4\pi V_M(\lambda^2-1)}{2\lambda(\lambda-1)}
 =\frac{2\pi V_M(\lambda+1)}{\lambda},
\]
whereas the other terms tend to zero. Hence
\[
 \liminf_{T\to\infty}\frac{P(T)}T
 \ge\frac{2\pi V_M(\lambda+1)}{\lambda}.
\]
Letting $\lambda\downarrow1$ and combining this with
\eqref{eq:extrinsic-upper} proves the limit in
\eqref{eq:extrinsic}. The argument also applies when $V_M=0$.
\end{proof}

\begin{lemma}[Limits of the normal Ricci integral and boundary term]
\label{lem:boundary-limits}
One has
\begin{equation}\label{eq:boundary-limits}
 \lim_{T\to\infty}\frac{J(T)}T=0,
 \qquad
 \lim_{T\to\infty}\frac{Q(T)}T=8\pi V_M.
\end{equation}
\end{lemma}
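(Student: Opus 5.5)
The plan is to work with the identity \eqref{eq:ricci-identity}, $J(T)=2P(T)+Q(a)-Q(T)$. Proposition~\ref{prop:extrinsic} already gives $2P(T)/T\to8\pi V_M$, so the two limits in \eqref{eq:boundary-limits} are equivalent. Indeed, $J(T)/T\to0$ holds if and only if $Q(T)/T\to8\pi V_M$. I will prove the statement about $J$, using two facts: $J$ is nondecreasing, because $\Ric\ge0$, and we have an averaged lower bound on $Q$.

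\textbf{Step 1 (averaging the identity).} Fix $\lambda>1$ and integrate \eqref{eq:ricci-identity} over $t\in[R,\lambda R]$:
\[
 \int_R^{\lambda R}J(t)\dd t=2\int_R^{\lambda R}P(t)\dd t+(\lambda-1)R\,Q(a)-\int_R^{\lambda R}Q(t)\dd t .
\]
The limit in \eqref{eq:extrinsic} gives $P(t)=4\pi V_Mt+o(t)$, uniformly for $t\in[R,\lambda R]$. Hence the $P$ term equals $4\pi V_M(\lambda^2-1)R^2+o(R^2)$. The $Q(a)$ term is $O(R)$. By \eqref{eq:boundary-average-limit}, the $Q$ integral is at least $4\pi V_M(\lambda^2-1)R^2-o(R^2)$. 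Together these give
\[
 \limsup_{R\to\infty}\frac1{R^2}\int_R^{\lambda R}J(t)\dd t\le0 .
\]

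\textbf{Step 2 (monotonicity).} Since $\Ric(\nu,\nu)\ge0$, $J$ is nonnegative and nondecreasing. Therefore $\int_R^{\lambda R}J\dd t\ge(\lambda-1)R\,J(R)\ge0$, and Step~1 forces $J(R)/R\to0$. Monotonicity therefore converts the averaged smallness into pointwise smallness without any loss, and no choice of $\lambda$ is even needed. The second limit then follows from the identity together with Proposition~\ref{prop:extrinsic}:
\[
 \frac{Q(T)}T=\frac{2P(T)}T+\frac{Q(a)}T-\frac{J(T)}T\longrightarrow8\pi V_M .
\]

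\textbf{Expected obstacle.} The only delicate point is that $Q$ has no pointwise lower bound, only the averaged one \eqref{eq:boundary-average}. This is why the argument averages first and only then uses the monotonicity of $J$ to pass back to pointwise limits. I also need to check that the $o(t)$ error in $P$ integrates to $o(R^2)$ over $[R,\lambda R]$, which holds because the interval has length comparable to $R$. The case $V_M=0$ requires no separate treatment.
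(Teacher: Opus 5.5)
Your proposal is correct and is essentially the paper's proof. The paper likewise bounds $TJ(T)\le\int_T^{2T}J(t)\dd t$ by monotonicity, integrates \eqref{eq:ricci-identity} over $[T,2T]$, and controls the result using Proposition~\ref{prop:extrinsic}, the boundary average \eqref{eq:boundary-average} and Lemma~\ref{lem:area}; this is your argument with $\lambda=2$, and it then reads off $Q(T)/T\to8\pi V_M$ from the identity exactly as you do.
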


\begin{proof}
Since $\Ric\ge0$, the function $J$ is nonnegative and
nondecreasing. Thus \eqref{eq:ricci-identity} gives
\[
 T J(T)\le\int_T^{2T}J(t)\dd t
 =2\int_T^{2T}P(t)\dd t+TQ(a)-\int_T^{2T}Q(t)\dd t.
\]
Apply \eqref{eq:boundary-average} with endpoints $T$ and $2T$
and divide by $T^2$. We obtain
\begin{equation}\label{eq:normal-ricci-average}
 \begin{split}
 0\le\frac{J(T)}T
 &\le\frac2{T^2}\int_T^{2T}P(t)\dd t+\frac{Q(a)}T\\
 &\quad-\underline q_T\frac{A(2T)-A(T)}{T^2}
       +\frac{C\log2}{T^2}.
 \end{split}
\end{equation}
By Proposition~\ref{prop:extrinsic},
\[
 \lim_{T\to\infty}\frac2{T^2}\int_T^{2T}P(t)\dd t
 =12\pi V_M.
\]
Indeed, for every $\delta>0$ and all sufficiently large $T$,
$|P(t)-4\pi V_M t|\le\delta t$ for $t\in[T,2T]$; the resulting
error after integration and multiplication by $2/T^2$ is at most
$3\delta$. Since $\underline q_T\to1$, Lemma~\ref{lem:area} also gives
\[
 \lim_{T\to\infty}\underline q_T\frac{A(2T)-A(T)}{T^2}
 =12\pi V_M.
\]
The right side of \eqref{eq:normal-ricci-average} therefore tends
to zero, proving $J(T)/T\to0$. Finally,
\eqref{eq:ricci-identity} yields
\[
 \frac{Q(T)}T=2\frac{P(T)}T+\frac{Q(a)}T-\frac{J(T)}T.
\]
The limit of $P(T)/T$ and the first conclusion now give
$Q(T)/T\to8\pi V_M$.
\end{proof}

\begin{proof}[Proof of Theorem~\ref{thm:main}\,\textup{(\ref{case:one})}]
By Lemma~\ref{lem:end-euler},
$\chi_\infty=2\chi(M)$ and $\chi(M)\in\{0,1\}$. Divide
\eqref{eq:scalar-identity} by $T$ and use
Lemma~\ref{lem:intrinsic}, Proposition~\ref{prop:extrinsic},
and Lemma~\ref{lem:boundary-limits}. The result is
\begin{equation}\label{eq:sublevel-limit}
 \lim_{T\to\infty}\frac{I_f(T)}T
 =4\pi\chi_\infty-8\pi V_M=:L.
\end{equation}
In particular, $L\ge0$ because $S\ge0$.

It remains to pass from sublevel sets to geodesic balls. Fix
$0<\delta<1$. Lemma~\ref{lem:gradients} gives
$(1-\delta)\rho\le f\le(1+\delta)\rho$ outside a compact set $K$.
Choose $r$ large enough that $K\subset B_p(r)$ and
$\max_K f<(1-\delta)r$. Then
\[
 \{f<(1-\delta)r\}\subset B_p(r)
 \subset\{f<(1+\delta)r\}.
\]
Indeed, the first inclusion follows from the lower bound on $f$
outside $K$, and the second from the upper bound there and the
choice of $r$ on $K$. Nonnegativity of $S$ therefore gives
\[
 I_f((1-\delta)r)\le I_p(r)\le I_f((1+\delta)r).
\]
After division by $r$, \eqref{eq:sublevel-limit} implies
\[
 (1-\delta)L
 \le\liminf_{r\to\infty}\frac{I_p(r)}r
 \le\limsup_{r\to\infty}\frac{I_p(r)}r
 \le(1+\delta)L.
\]
Letting $\delta\downarrow0$ gives the limit $L$ over geodesic balls.
Since $\chi_\infty=2\chi(M)$, we have
$L=8\pi(\chi(M)-V_M)$, proving \eqref{eq:main}.
Its upper bound follows from $\chi(M)\le1$.
\end{proof}

\section{Two ends and sharp examples}\label{sec:two}

\begin{proof}[Proof of Theorem~\ref{thm:main}\,\textup{(\ref{case:two})}]
By the Cheeger--Gromoll
splitting theorem \cite{CheegerGromoll}, a complete manifold with nonnegative Ricci curvature has at most two
ends. If there are two, we have $M=N^2\times\R$ with $N$
closed and connected. The induced metric on $N$ has $K_N\ge0$.
Let $p=(x_0,0)$ and $d_N=\operatorname{diam}(N)$. For $R>d_N$,
\[
 N\times\bigl(-\sqrt{R^2-d_N^2},\sqrt{R^2-d_N^2}\bigr)
 \subset B_p(R)\subset N\times(-R,R).
\]
The scalar curvature on the product is $S_N=2K_N\ge0$. Integrating
over these inclusions and dividing by $R$ yields
\[
 \lim_{R\to\infty}\frac{I_p(R)}R=2\int_N S_N\dd A
 =8\pi\chi(N)\le16\pi.
\]
The sharpness assertions are verified below.
\end{proof}

\paragraph{Equality for every asymptotic volume ratio.}
Fix $\ell\in[0,1]$. Choose a smooth nonincreasing function
$h:[0,\infty)\to[\ell,1]$ equal to $1$ near zero and to $\ell$
for $s\ge s_0$. Define
\[
 F(s)=\int_0^s h(\tau)\dd\tau,\qquad
 g=\dd s^2+F(s)^2g_{\Sph^2}
\]
on $\R^3$, collapsing the sphere at $s=0$. Since $F(s)=s$ near zero,
the metric is smooth at the origin $o$. It is complete and one-ended,
and $s$ is the distance from $o$. Its sectional curvatures are
\[
 K_{\mathrm{rad}}=-F''/F\ge0,
 \qquad K_{\mathrm{tan}}=(1-(F')^2)/F^2\ge0.
\]
For $s\ge s_0$, $F(s)=\ell s+c$ with a constant $c\ge0$.
Thus
\[
 {V_M}=\lim_{R\to\infty}
 \frac{4\pi\int_0^R F(s)^2\dd s}{(4\pi/3)R^3}=\ell^2.
\]
The scalar-curvature density in the radial variable is
\[
 S(s)\,4\pi F(s)^2
 =8\pi\bigl(1-(F'(s))^2-2F(s)F''(s)\bigr).
\]
For $s\ge s_0$ this is exactly $8\pi(1-\ell^2)$. Therefore
\[
 I_o(R)=I_o(s_0)+8\pi(1-\ell^2)(R-s_0)\quad(R\ge s_0),
 \qquad
 \lim_{R\to\infty}\frac{I_o(R)}R=8\pi(1-{V_M}).
\]
Since $M$ is diffeomorphic to $\R^3$, one has $\chi(M)=1$.
Thus the calculation agrees with the full
limit formula in Theorem~\ref{thm:main}. As $\ell$ ranges over $[0,1]$,
it realizes equality in the one-ended upper bound for every
$V_M\in[0,1]$. The case $\ell=0$ has a cylindrical end;
the case $\ell=1$ is Euclidean space. The two-ended constant
$16\pi$ is attained on $\Sph_b^2\times\R$ for every $b>0$.

\begin{example}[A collapsed one-ended flat product]\label{ex:flat-product}
Let $\Sph^1_L$ be a circle of length $L>0$, and equip
$M=\Sph^1_L\times\R^2$ with the product metric. This manifold is
complete, connected, noncompact, without boundary, and flat.
For every $R>0$, the complement of the compact set
\[
 C_R=\Sph^1_L\times\overline{B_{\R^2}(0,R)}
\]
is connected and unbounded. Since the $C_R$ form a compact
exhaustion, $M$ has one end. Moreover,
$\pi_1(M)\cong\mathbb Z$, so $M$ is not homeomorphic to $\R^3$.

For $p=(z_0,x_0)$ and every $r>0$, projection to the Euclidean
factor gives
\[
 B_p(r)\subset\Sph^1_L\times B_{\R^2}(x_0,r),
 \qquad \Vol B_p(r)\le L\pi r^2.
\]
Consequently,
\[
 0\le\frac{\Vol B_p(r)}{(4\pi/3)r^3}
 \le\frac{3L}{4r}\longrightarrow0,
\]
and $V_M=0$. Since $S\equiv0$, we have $I_p(r)=0$ for every $r>0$,
and hence
\[
 \lim_{r\to\infty}\frac{I_p(r)}r
 =0<8\pi=8\pi(1-V_M).
\]
The boundary of $C_R$ is a torus, and $M$ deformation retracts
onto a circle. Hence $\chi(M)=0$, in agreement with the limit
formula \eqref{eq:main}. The example shows that the positive-volume
formula \eqref{eq:positive-volume-limit} does not extend to all
collapsed one-ended manifolds. The term $8\pi\chi(M)$ in
\eqref{eq:main} records this topological distinction.
\end{example}

Finally, the asymptotic limit is independent of the basepoint.
If $d(p,p')=d$, then for $R>d$, nonnegativity of $S$ and ball inclusions
give
\[
 I_p(R-d)\le I_{p'}(R)\le I_p(R+d).
\]
If $I_p(R)/R$ tends to $L$, then both
$I_p(R\pm d)/R$ tend to $L$, since $(R\pm d)/R\to1$.
The displayed inclusions therefore give the same limit at $p'$.


\begin{thebibliography}{99}

\bibitem{ChanLeeLi2026}
P.-Y.~Chan, M.-C.~Lee, and M.~Li,
\emph{Volume growth and integral curvature bound for
	non-negatively curved three-manifolds},
preprint (2026), arXiv:2609.24096v1.
\url{https://arxiv.org/abs/2609.24096v1}.

\bibitem{CheegerGromoll}
J.~Cheeger and D.~Gromoll,
\emph{The splitting theorem for manifolds of nonnegative Ricci curvature},
J. Differential Geom. \textbf{6} (1971), 119--128.

\bibitem{CheegerGromollStructure}
J.~Cheeger and D.~Gromoll,
\emph{On the structure of complete manifolds of nonnegative curvature},
Ann. of Math. (2) \textbf{96} (1972), no.~3, 413--443.

\bibitem{ChenXuZhang}
Z.~Chen, G.~Xu, and S.~Zhang,
\emph{Integrals and rigidity on manifolds with nonnegative Ricci curvature},
preprint (2026), arXiv:2602.10393.
\url{https://arxiv.org/abs/2602.10393}.

\bibitem{Cheng}
H.~Cheng,
\emph{Unbounded normalized scalar curvature integrals in dimension four},
preprint (2026), arXiv:2609.10323.
\url{https://arxiv.org/abs/2609.10323}.

\bibitem{CLSVolume}
O.~Chodosh, C.~Li, and D.~Stryker,
\emph{Volume growth of $3$-manifolds with scalar curvature lower bounds},
Proc. Amer. Math. Soc. \textbf{151} (2023), no.~10, 4501--4511.
\url{https://doi.org/10.1090/proc/16521}.

\bibitem{ChowLu}
B.~Chow and P.~Lu,
\emph{On the asymptotic scalar curvature ratio of complete Type I-like
ancient solutions to the Ricci flow on noncompact $3$-manifolds},
Comm. Anal. Geom. \textbf{12} (2004), no.~1--2, 59--91.
\url{https://intlpress.com/JDetail/1805786063895486465}.

\bibitem{CohnVossen}
S.~Cohn-Vossen,
\emph{K\"urzeste Wege und Totalkr\"ummung auf Fl\"achen},
Compositio Math. \textbf{2} (1935), 69--133.
\url{https://www.numdam.org/item/CM_1935__2__69_0/}.

\bibitem{ColdingMonotonicity}
T.~H.~Colding,
\emph{New monotonicity formulas for Ricci curvature and applications: I},
Acta Math. \textbf{209} (2012), no.~2, 229--263.
\url{https://doi.org/10.1007/s11511-012-0086-2}.

\bibitem{ColdingMinicozzi}
T.~H.~Colding and W.~P.~Minicozzi~II,
\emph{Ricci curvature and monotonicity for harmonic functions},
Calc. Var. Partial Differential Equations \textbf{49} (2014),
no.~3--4, 1045--1059.
\url{https://doi.org/10.1007/s00526-013-0610-z}.

\bibitem{Deng2026}
J.~Deng,
\emph{Cohn--Vossen-type inequalities for three-manifolds and
locally conformally flat manifolds},
preprint (2026), arXiv:2606.01368v2.
\url{https://arxiv.org/abs/2606.01368v2}.

\bibitem{GreeneWu}
R.~E.~Greene and H.~Wu,
\emph{$C^\infty$ approximations of convex, subharmonic, and
plurisubharmonic functions},
Ann. Sci. \'Ecole Norm. Sup. (4) \textbf{12} (1979), 47--84,
especially Propositions~2.2--2.3.
\url{https://www.numdam.org/item/ASENS_1979_4_12_1_47_0/}.

\bibitem{HaoZhu}
T.~Hao and J.~Zhu,
\emph{A counterexample to Yau's conjectured asymptotic scalar-curvature
integral bound},
preprint (2026), arXiv:2609.06533.
\url{https://arxiv.org/abs/2609.06533}.

\bibitem{LiuRicciThree}
G.~Liu,
\emph{$3$-manifolds with nonnegative Ricci curvature},
Invent. Math. \textbf{193} (2013), no.~2, 367--375.
\url{https://doi.org/10.1007/s00222-012-0428-x}.

\bibitem{MaLCF}
S.~Ma,
\emph{Cohn--Vossen theory for locally conformally flat manifolds},
preprint (2026), arXiv:2606.13979v2.
\url{https://arxiv.org/abs/2606.13979v2}.

\bibitem{MWGreenComparison}
O.~Munteanu and J.~Wang,
\emph{Comparison theorems for $3$D manifolds with scalar curvature bound},
Int. Math. Res. Not. IMRN (2023), no.~3, 2215--2242.
\url{https://doi.org/10.1093/imrn/rnab307}.

\bibitem{MWGeometry}
O.~Munteanu and J.~Wang,
\emph{Geometry of three-dimensional manifolds with positive scalar curvature},
Amer. J. Math. \textbf{148} (2026), no.~1, 131--160.
\url{https://doi.org/10.1353/ajm.2026.a980770}.

\bibitem{MunteanuWangIntegral}
O.~Munteanu and J.~Wang,
\emph{Sharp integral bound of scalar curvature on $3$-manifolds},
Trans. Amer. Math. Soc., published electronically (2026).
\url{https://doi.org/10.1090/tran/9744}.

\bibitem{Petrunin}
A.~Petrunin,
\emph{An upper bound for the curvature integral},
St. Petersburg Math. J. \textbf{20} (2009), 255--265;
Russian original in Algebra i Analiz \textbf{20} (2008), no.~2, 134--148.
\url{https://doi.org/10.1090/S1061-0022-09-01046-2}.

\bibitem{ShiYau}
W.-X.~Shi and S.-T.~Yau,
\emph{A note on the total curvature of a K\"ahler manifold},
Math. Res. Lett. \textbf{3} (1996), no.~1, 123--132.
\url{https://doi.org/10.4310/MRL.1996.v3.n1.a12}.

\bibitem{WXZVolume}
G.~Wei, G.~Xu, and S.~Zhang,
\emph{Volume growth and positive scalar curvature},
Trans. Amer. Math. Soc. \textbf{378} (2025), no.~9, 6109--6136.
\url{https://doi.org/10.1090/tran/9280}.

\bibitem{XuNonparabolic}
G.~Xu,
\emph{Integral of scalar curvature on non-parabolic manifolds},
J. Geom. Anal. \textbf{30} (2020), no.~1, 901--909.
\url{https://doi.org/10.1007/s12220-019-00174-7}.

\bibitem{Xu}
G.~Xu,
\emph{Integral of scalar curvature on manifolds with a pole},
Proc. Amer. Math. Soc. \textbf{152} (2024), no.~11, 4865--4872.
\url{https://doi.org/10.1090/proc/16584}.

\bibitem{XuCollapse}
G.~Xu,
\emph{Integral of scalar curvature under volume collapse},
preprint, arXiv:2609.10160 (2026).
\url{https://arxiv.org/abs/2609.10160}.

\bibitem{XuSharp2026}
G.~Xu,
\emph{Integral of scalar curvature: Sharp asymptotic bounds and rigidity},
preprint (2026), arXiv:2609.26357v1.
\url{https://arxiv.org/abs/2609.26357v1}.

\bibitem{Yang}
B.~Yang,
\emph{On a problem of Yau regarding a higher dimensional generalization
of the Cohn--Vossen inequality},
Math. Ann. \textbf{355} (2013), no.~2, 765--781.
\url{https://doi.org/10.1007/s00208-012-0803-3}.

\bibitem{YauProblems}
S.-T.~Yau,
\emph{Open problems in geometry},
in \emph{Shiing-Shen Chern---a great geometer of the twentieth century}
(S.-T.~Yau, ed.), International Press, Hong Kong, 1992,
pp.~275--319, Problem~9.

\bibitem{ZhuComparison}
B.~Zhu,
\emph{Comparison theorem and integral of scalar curvature on three manifolds},
J. Geom. Anal. \textbf{32} (2022), no.~7, Paper No.~197, 19~pp.
\url{https://doi.org/10.1007/s12220-022-00934-y}.

\bibitem{ZhuGeometry}
B.~Zhu,
\emph{Geometry of positive scalar curvature on complete manifold},
J. Reine Angew. Math. \textbf{791} (2022), 225--246.
\url{https://doi.org/10.1515/crelle-2022-0049}.

\end{thebibliography}
\end{document}